\newtheorem{teor}{Theorem}
\newtheorem{cor}{Corollary}
\newtheorem{prop}{Proposition}
\newtheorem{con}{Conjecture}
\newtheorem{lem}{Lemma}
\theoremstyle{definition}
\newtheorem{q}{Question}
\newtheorem*{rem}{Remark}
\renewcommand{\subjclassname}{AMS \textup{2010} Mathematics Subject
Classification\ }
\author{Jos\'{e} Mar\'{i}a Grau}
\address{Departamento de Matemáticas, Universidad de Oviedo\\ Avda. Calvo Sotelo s/n, 33007 Oviedo, Spain}
\email{grau@uniovi.es}
\author{Antonio M. Oller-Marc\'{e}n}
\address{Centro Universitario de la Defensa de Zaragoza\\ Ctra. Huesca s/n, 50090 Zaragoza, Spain} \email{oller@unizar.es}
\title{Power sums over finite commutative unital rings}
\begin{document}

\begin{abstract}
In this paper we compute the sum of the $k$-th powers over any finite commutative unital rings, thus generalizing known results for finite fields, the rings of integers modulo $n$ or the ring of Gaussian integers modulo $n$. As an application we focus on quotient rings of the form $\mathbb{Z}/n\mathbb{Z}[x]/(f(x))$ for any polynomial $f\in\mathbb{Z}[x]$.
\end{abstract}

\maketitle \subjclassname{13B99, 13A99, 13F99}

\keywords{Keywords: Power sum, Finite commutative unital ring, Polynomial ring over $\mathbb{Z}/n\mathbb{Z}$}

\section{Introduction}

For a finite ring $R$ and $k\geq 1$, we define the power sum
$$S_k (R):= \sum_{r\in R} r^k.$$
Throughout the paper we will deal only with finite commutative unital rings and our main objective will be the computation of $S_k (R)$ in such case.

The problem of computing $S_k (R)$ is completely solved only for some particular families of finite rings. If $R$ is a finite field $\mathbb{F}_q$, the value of $S_k (\mathbb{F}_q)$ is well-known. If $R=\mathbb{Z}/n\mathbb{Z}$, the study of $S_k (\mathbb{Z}/n\mathbb{Z})$ dates back to 1840 \cite{VON} and has been completed in various works \cite{CAR,GMO,MOR2}. More recently, the case $R=\mathbb{Z}/n\mathbb{Z}[i]$ has been solved in \cite{for}. In these cases, we have the following results.

\begin{prop}\label{pre}
Let $k\geq 1$ be an integer.
\begin{itemize}
\item[i)] Finite fields:
$$S_k (\mathbb{F}_q)=\begin{cases}
-1  , & \textrm{if $(q-1) \mid k $ };\\ 0  , &\textrm{otherwise}.
\end{cases}$$
\item[ii)] Integers modulo $n$:
$$S_k (\mathbb{Z}/n\mathbb{Z})=
\begin{cases}  \displaystyle{-\sum_{p \mid n, p-1 \mid k} \frac{n }{p }
    },
  & \textrm{if $k $ is even or $k=1$ or $n \not \equiv 0\pmod{4}$};\\
   \displaystyle{0
    }, & \textrm{otherwise}.
\end{cases}
$$
\item[iii)] Gaussian integers modulo $n$:
$$
S_k (\mathbb{Z}/n\mathbb{Z}[i])=
\begin{cases}  \frac{n}{2}(1+i),
  & \textrm{if $k>1$ is odd and $n\equiv 2\pmod{4}$};\\
  \displaystyle{-\sum_{p\in\mathcal{P}(k,n)} \frac{n^2}{p^2}}, & \textrm{otherwise}.
\end{cases}
$$
where $$
  \mathcal{P}(k,n):=\{ \textrm{prime $p$} : p \mid \mid n, p^2-1\mid k,
  p\equiv 3\pmod{4}\}
  $$
  and $p\mid\mid n$ means that $p\mid n$, but $p^2\nmid n$.
\end{itemize}
\end{prop}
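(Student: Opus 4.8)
The plan is to dispose of the field case by hand and then run parts (ii) and (iii) through a single engine: reduction to local rings via the Chinese Remainder Theorem, together with the behaviour of $S_k$ under direct products. For (i), the summand vanishes at $0$, so $S_k(\mathbb{F}_q)=\sum_{r\in\mathbb{F}_q^\times}r^k$. Fixing a generator $g$ of the cyclic group $\mathbb{F}_q^\times$ turns this into the geometric sum $\sum_{j=0}^{q-2}(g^k)^j$. If $(q-1)\mid k$ then $g^k=1$ and the sum is $q-1=-1$ in $\mathbb{F}_q$; otherwise $g^k-1$ is invertible and $\sum_{j=0}^{q-2}(g^k)^j=\frac{(g^k)^{q-1}-1}{g^k-1}=0$ because $(g^k)^{q-1}=(g^{q-1})^k=1$. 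The engine for the other two parts is the identity $S_k(R_1\times R_2)=\bigl(|R_2|\,S_k(R_1),\,|R_1|\,S_k(R_2)\bigr)$, which follows at once from $(r_1,r_2)^k=(r_1^k,r_2^k)$.

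For (ii) I would apply this to $\mathbb{Z}/n\mathbb{Z}\cong\prod_{p^a\,\|\,n}\mathbb{Z}/p^a\mathbb{Z}$, so that the $p$-component of $S_k$ is $\frac{n}{p^a}S_k(\mathbb{Z}/p^a\mathbb{Z})$; everything reduces to the prime powers, and if $S_k(\mathbb{Z}/p^a\mathbb{Z})\equiv -p^{a-1}\pmod{p^a}$ when $(p-1)\mid k$ (and $\equiv 0$ otherwise) then the $p$-component becomes $-\tfrac{n}{p}$ and CRT reassembles these into $-\sum_{(p-1)\mid k}\frac{n}{p}$. To evaluate $S_k(\mathbb{Z}/p^a\mathbb{Z})$ I would split by $p$-adic valuation, writing each nonzero $x=p^v u$ with $u$ a unit mod $p^{a-v}$, so $S_k=\sum_{v\ge 0}p^{vk}\Sigma_{a-v}$ with $\Sigma_b:=\sum_{u\in(\mathbb{Z}/p^b\mathbb{Z})^\times}u^k$. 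The shift $g^k\Sigma_b=\Sigma_b$ for every unit $g$ gives $(g^k-1)\Sigma_b=0$; choosing $g$ reducing to a generator mod $p$ shows $\Sigma_b\equiv 0$ unless $(p-1)\mid k$, and the factors $p^{vk}$ push the $v\ge 1$ terms into powers $\ge p^a$. When $(p-1)\mid k$ and $p$ is odd, cyclicity of $(\mathbb{Z}/p^b\mathbb{Z})^\times$ lets one sum over the ($p$-power-order) image of the $k$-th power map to get $\Sigma_b\equiv -p^{b-1}\pmod{p^b}$, and only the $v=0$ term survives. The prime $p=2$ needs separate care since its unit group is not cyclic: for odd $k$ the involution $x\mapsto -x$ pairs the terms, leaving $x=0$ and $x=2^{a-1}$, whence $S_k=0$ once $4\mid n$ (as $(2^{a-1})^k\equiv 0$) but $S_k\equiv -\tfrac{n}{2}$ when $n\equiv 2\pmod 4$—precisely the dichotomy in the statement.

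For (iii) the same product formula applies to $\mathbb{Z}/n\mathbb{Z}[i]\cong\prod_{p^a\,\|\,n}\mathbb{Z}[i]/p^a$, and I would organize the computation by the splitting type of $p$. If $p\equiv 1\pmod 4$ then $\mathbb{Z}[i]/p^a\cong(\mathbb{Z}/p^a\mathbb{Z})^2$, and the product formula makes its power sum a multiple of $p^a\equiv 0$, so split primes never contribute. If $p\equiv 3\pmod 4$ then $\mathbb{Z}[i]/p^a$ is local with residue field $\mathbb{F}_{p^2}$; repeating the valuation/shift analysis with $\mathbb{F}_{p^2}$ in place of $\mathbb{F}_p$ yields a nonzero contribution exactly when $(p^2-1)\mid k$, and only from $a=1$, where $\mathbb{Z}[i]/p=\mathbb{F}_{p^2}$ and part (i) gives $-1$; this produces $-\sum_{p\in\mathcal{P}(k,n)}\frac{n^2}{p^2}$. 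Finally $p=2$ ramifies: a direct inspection of $\mathbb{Z}[i]/2^a$, using $x\mapsto -x$ for odd $k$ together with $(1+i)^2=2i$, shows the only surviving value is $1+i$ in $\mathbb{Z}[i]/2$ when $k>1$ is odd and $2\,\|\,n$, which CRT reconstructs to $\frac{n}{2}(1+i)$.

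I expect the main obstacle to lie in the prime-power evaluations at the even and ramified primes: establishing $\Sigma_b\equiv -p^{b-1}$ in the cyclic case, and more delicately controlling $\mathbb{Z}/2^a\mathbb{Z}$ and $\mathbb{Z}[i]/2^a$, where the unit group is not cyclic and the pairing argument must be combined with careful valuation estimates. The secondary bookkeeping hurdle is verifying that the higher prime powers ($a\ge 2$) contribute $-\tfrac{n}{p}$ in case (ii) yet nothing in case (iii), which is what makes $\mathcal{P}(k,n)$ range only over primes with $p\,\|\,n$.
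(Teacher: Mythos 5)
The paper never proves Proposition \ref{pre}: it is quoted as background, with (i) classical, (ii) cited to von Staudt, Carlitz, Moree et al., and (iii) cited to \cite{for}. So there is no in-paper proof to match, and your CRT-plus-local-analysis plan is the right unifying skeleton: part (i) is complete and correct; your product formula is exactly the paper's Lemma \ref{chop}; the odd-prime computation $\Sigma_b\equiv -p^{b-1}\pmod{p^b}$ when $(p-1)\mid k$ is correct as sketched (the image of the $k$-th power map is the subgroup $1+p^{b-c}\mathbb{Z}/p^b\mathbb{Z}$, whose element sum is $\equiv p^c$, and multiplying by the fibre size gives $\varphi(p^b)\equiv -p^{b-1}$); and the split/inert/ramified trichotomy in (iii) is the right organization, with split primes dying because their component carries the factor $p^a\equiv 0$. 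It is worth noting that the cases you find hardest are precisely those the paper later dispatches wholesale: its Proposition \ref{nocic} shows $S_k=0$ for \emph{any} non-cyclic ring of characteristic $p^t$, $t>1$, which instantly kills $\mathbb{Z}[i]/2^a$ and $\mathbb{Z}[i]/p^a$ for $a\geq 2$ without touching unit groups, and indeed the paper recovers (iii) as the $D=-1$ case of its Theorem \ref{tquad}.

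Three concrete soft spots remain in your sketch. First, your parenthetical at $p=2$, ``$S_k=0$ once $4\mid n$ (as $(2^{a-1})^k\equiv 0$)'', is false for $k=1$: there $(2^{a-1})^1=2^{a-1}\not\equiv 0\pmod{2^a}$, and this surviving fixed point is exactly what produces the $-n/2$ term of the statement at $k=1$ even when $4\mid n$; the correct dichotomy is $k=1$ versus odd $k>1$, not odd versus even. (Relatedly, your claim that the factors $p^{vk}$ kill all $v\geq 1$ terms tacitly needs $k\geq 2$; it is rescued only because $(p-1)\mid k$ with $p$ odd forces $k$ even.) Second, you never actually evaluate the even-$k$ cases at the prime $2$, which the statement needs: $S_k(\mathbb{Z}/2^a\mathbb{Z})\equiv -2^{a-1}\pmod{2^a}$ for all $a\geq 2$ (the source of the $-n/2$ summand when $4\mid n$), provable via $(\mathbb{Z}/2^a\mathbb{Z})^\times=\{\pm 5^j\}$ so that for even $k$ the unit sum is $2\sum_j 5^{jk}\equiv 2^{a-1}\equiv -2^{a-1}$; and $S_k(\mathbb{Z}[i]/2^a)=0$ for even $k$, $a\geq 2$, which your pairing does not reach. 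Third, for inert $p\equiv 3\pmod 4$ and $a\geq 2$ the unit group of $\mathbb{Z}[i]/p^a$ is \emph{not} cyclic (it is $C_{p^2-1}\times C_{p^{a-1}}\times C_{p^{a-1}}$), so ``repeating the valuation/shift analysis with $\mathbb{F}_{p^2}$ in place of $\mathbb{F}_p$'' is not literally available when $(p^2-1)\mid k$: one must instead observe that the $k$-th powers land in the $1$-units $\cong(\mathbb{Z}/p^{a-1}\mathbb{Z})^2$, whose relevant subgroup $1+p^{e+1}\mathbb{Z}[i]$ has element sum $\equiv$ its order, whence $\Sigma_a\equiv|{(\mathbb{Z}[i]/p^a)^\times}|=p^{2a-2}(p^2-1)\equiv 0\pmod{p^a}$ --- the \emph{opposite} outcome from the rational cyclic case, where $a\geq 2$ still contributes $-p^{a-1}$. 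This asymmetry is exactly why $\mathcal{P}(k,n)$ ranges only over $p\mid\mid n$, and it does not follow from your cyclic computation; with these patches (or by borrowing the paper's Proposition \ref{nocic}) your argument closes.
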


On the other hand, there are not many works dealing with the non-commutative case. The case of square matrices over finite fields is studied in \cite{BCL}, where the following result is proved.

\begin{prop}
Let $k,d\geq 1$ be integers. Then $S_k(\mathbb{M}_d(\mathbb{F}_q))=0$ unless $q=2=d$ and $1<k\equiv -1,0,1 \pmod{6}$ in which case $S_k (\mathbb{M}_d(\mathbb{F}_q))=I_2$.
\end{prop}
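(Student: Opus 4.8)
The plan is to use symmetry to cut the problem down to a single scalar and then to evaluate that scalar by a walk-counting argument in characteristic $p$.

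First I would exploit conjugation invariance. For every $P\in GL_d(\mathbb{F}_q)$ the map $A\mapsto PAP^{-1}$ is a bijection of $\mathbb{M}_d(\mathbb{F}_q)$ commuting with the $k$-th power map, so $S_k(\mathbb{M}_d(\mathbb{F}_q))=\sum_A(PAP^{-1})^k=P\,S_k(\mathbb{M}_d(\mathbb{F}_q))\,P^{-1}$. Thus $S_k$ commutes with every $P\in GL_d(\mathbb{F}_q)$; applying this to the transvections $P=I+E_{ij}$ ($i\ne j$) shows $S_k$ commutes with each $E_{ij}$, whence $S_k=c\,I_d$ is scalar for some $c\in\mathbb{F}_q$. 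Next I would use the dilation $A\mapsto\lambda A$ with $\lambda\in\mathbb{F}_q^{\times}$, again a bijection, which gives $S_k=\lambda^k S_k$, i.e. $(\lambda^k-1)c=0$. Since $\mathbb{F}_q^{\times}$ is cyclic of order $q-1$, if $(q-1)\nmid k$ one may choose $\lambda$ with $\lambda^k\ne1$ and conclude $c=0$. Hence it remains to treat $(q-1)\mid k$; note that $q=2$ escapes this reduction entirely, foreshadowing its exceptional role.

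To evaluate $c$ when $(q-1)\mid k$ I would compute the $(1,1)$ entry, $c=(S_k)_{11}=\sum_A(A^k)_{11}$, and expand $(A^k)_{11}=\sum_{i_1,\dots,i_{k-1}}A_{1i_1}A_{i_1i_2}\cdots A_{i_{k-1}1}$ as a sum over closed walks of length $k$ based at the vertex $1$ in the complete directed graph with loops on $\{1,\dots,d\}$. Interchanging the two summations, a walk with edge-multiplicities $m_e$ contributes $\sum_A\prod_e A_e^{m_e}=\prod_e\big(\sum_{x\in\mathbb{F}_q}x^{m_e}\big)$, and by Proposition~\ref{pre}(i) each factor equals $-1$ if $(q-1)\mid m_e$ with $m_e\ge1$, equals $0$ if $(q-1)\nmid m_e$, and equals $q=0$ in $\mathbb{F}_q$ if $m_e=0$. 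Consequently the only surviving walks are those traversing every one of the $d^2$ edges a positive multiple of $q-1$ times, each contributing $(-1)^{d^2}$, so that
\[
c\equiv(-1)^{d^2}\,W\pmod p,\qquad p=\operatorname{char}\mathbb{F}_q,
\]
where $W$ is the number of such spanning, $(q-1)$-balanced closed walks.

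The main obstacle is the determination of $W$ modulo $p$. A helpful auxiliary identity comes from the translation $A\mapsto A+sI$ followed by summing over $s\in\mathbb{F}_q$, which yields the mod-$p$ relation $\sum_{0\le j<k,\;(q-1)\mid(k-j)}\binom{k}{j}S_j=0$ among the (now scalar) sums. Using this together with explicit base cases, I would argue that $p\mid W$ whenever $(q,d)\ne(2,2)$: for $q\ge3$ or $d\ge3$ the forced high multiplicities and the large edge set should produce a fixed-point-free symmetry of the walk set (or force the binomials in the recurrence to collapse), making $W$ divisible by $p$. The genuinely delicate case is $q=d=2$, where the condition is simply that the walk use all four edges of the two-vertex graph; here inclusion--exclusion over the forbidden edge-sets expresses $W$ as an alternating combination of $(1,1)$-entries of powers of $2\times2$ zero-one matrices, one of which is the Fibonacci matrix $\left(\begin{smallmatrix}1&1\\1&0\end{smallmatrix}\right)$. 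Its entries modulo $2$ are periodic with Pisano period $3$, which combined with the period-$2$ contributions of the loopless pieces yields overall period $6$ and isolates exactly $k\equiv-1,0,1\pmod6$ (the value $k=1$ being excluded by direct inspection), with surviving value $I_2$. Carrying out this periodicity computation and rigorously establishing the vanishing of $W$ in every other case is where the real work lies.
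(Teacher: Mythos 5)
Your reduction is sound as far as it goes, and it is worth saying that the paper itself offers no proof to compare against: this proposition is quoted from Brawley--Carlitz--Levine \cite{BCL}. Your conjugation argument correctly forces $S_k=cI_d$ (for $d\geq 2$), the dilation $A\mapsto\lambda A$ correctly kills everything unless $(q-1)\mid k$, and the entry-wise expansion giving $c=(-1)^{d^2}W\pmod p$, with $W$ counting closed walks at vertex $1$ using every one of the $d^2$ directed edges a positive multiple of $q-1$ times, is correct (the $m_e=0$ factors contribute $q\equiv 0$, the rest contribute $-1$ or $0$ by Proposition \ref{pre} i)). Your $(2,2)$ endgame also genuinely closes: inclusion--exclusion over edge subsets gives the exact formula $W(k)=2^{k-1}-L_k+\varepsilon_k$, where $L_k$ is the Lucas number $F_{k-1}+F_{k+1}$ and $\varepsilon_k=1$ for $k$ even (coming from the loopless piece $\left(\begin{smallmatrix}0&1\\1&0\end{smallmatrix}\right)$), $0$ for $k$ odd; since $L_k$ is even exactly when $3\mid k$, one gets $W\equiv 1\pmod 2$ precisely for $k\equiv 0,\pm 1\pmod 6$, with $k=1,2$ vanishing for length reasons, exactly matching the statement.

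The genuine gap is the case $(q,d)\neq(2,2)$ with $(q-1)\mid k$ and $d\geq 2$, where you must show $p\mid W$, and here you offer only a hope: a ``fixed-point-free symmetry'' of the walk set or a collapse of binomials in the translation recurrence, with no candidate that works. The obvious symmetries fail this test: walk reversal fixes palindromic walks, and permutations of the vertices $\{2,\dots,d\}$ fix walks supported symmetrically, so neither yields divisibility by $p$ (and for $p>2$ an involution would at best give divisibility by $2$, not $p$). The translation identity $\sum_{j<k,\,(q-1)\mid k-j}\binom{k}{j}S_j=0$ is correct but is one linear relation among many unknowns and does not by itself force the vanishing. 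Since this case is the entire content of the theorem for all $q\geq 3$ or $d\geq 3$, the proof is incomplete at its central point. A smaller issue you should also have flagged: as printed the proposition allows $d=1$, where it is false --- $S_k(\mathbb{M}_1(\mathbb{F}_q))=S_k(\mathbb{F}_q)=-1$ whenever $(q-1)\mid k$ --- and your transvection step silently assumes $d\geq 2$; the intended hypothesis in \cite{BCL} is $d\geq 2$, so the discrepancy deserves an explicit remark rather than being absorbed unnoticed by the choice of method.
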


In \cite{for2} some general results for $S_k(\mathbb{M}_d(\mathbb{Z}/n\mathbb{Z}))$ are proved and the following conjecture is made about power sums of matrix rings over finite commutative rings.

\begin{con}
Let $d>1$ and let $R$ be a finite commutative ring. Then, $S_k(\mathbb{M}_d(R))=0$ unless the following conditions hold:
\begin{enumerate}
\item[i)] $d=2$,
\item[ii)] $|R| \equiv 2 \pmod{4}$ and $1<k\equiv -1,0,1 \pmod{6}$,
\item[iii)] The unique element $e\in R \setminus \{0\}$ such that $2e =0$ is idempotent.
\end{enumerate}
Moreover, in this case
$$ S_k(\mathbb{M}_d(R))=\begin{pmatrix} e & 0 \\ 0 & e \end{pmatrix}.$$
\end{con}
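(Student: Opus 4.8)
The plan is to reduce $S_k(\mathbb{M}_d(R))$ to a single scalar, exploit the multiplicative symmetries of $\mathbb{M}_d(R)$ to annihilate most cases, and then attack the survivors through a closed-walk expansion that converts the matrix sum into the ordinary ring power sums $S_m$ computed in this paper. For the first reduction, note that for every $P\in GL_d(R)$ the map $A\mapsto PAP^{-1}$ is a bijection of $\mathbb{M}_d(R)$ commuting with $A\mapsto A^k$, so $P\,S_k(\mathbb{M}_d(R))\,P^{-1}=S_k(\mathbb{M}_d(R))$. Taking $P=I+e_{ij}$ for $i\neq j$ (always invertible) forces $S_k(\mathbb{M}_d(R))$ to commute with every $e_{ij}$, hence to be a scalar matrix $c\,I_d$ with $c\in R$; the task becomes computing the single element $c=\sum_{A}(A^k)_{11}$. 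Two further symmetries constrain $c$: the bijection $A\mapsto uA$ for a unit $u\in R^{\times}$ gives $(u^k-1)c=0$, and $A\mapsto -A$ gives $((-1)^k-1)c=0$.

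Next I would decompose $R\cong\prod_i L_i$ into finite local rings with residue fields $\mathbb{F}_{q_i}$, so that $S_k(\mathbb{M}_d(R))$ corresponds to $\bigl((|R|/|L_i|)^{d^2}\,S_k(\mathbb{M}_d(L_i))\bigr)_i$ and one may argue one factor at a time. The scaling relation kills the $i$-th component whenever $(q_i-1)\nmid k$ (choose $u$ with $\bar u^k\neq 1$, making $u^k-1$ a unit), and the sign relation kills it when $q_i$ is odd and $k$ is odd. For any factor $L_i$ that is itself a \emph{field}, the cited Proposition for $S_k(\mathbb{M}_d(\mathbb{F}_q))$ applies directly: its component vanishes unless $L_i=\mathbb{F}_2$ and $d=2$ with $1<k\equiv -1,0,1\pmod 6$, in which case it equals $I_2$. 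Assembling the components, a nonzero answer can occur only when exactly one local factor is $\mathbb{F}_2$ and all others have odd order, i.e. precisely when $|R|\equiv 2\pmod 4$; the surviving $\mathbb{F}_2$-component then produces $e\,I_2$ with $e$ the idempotent supported on that factor, matching the statement.

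The genuinely new work concerns the \emph{non-field} local factors $L$ (those with nonzero maximal ideal $\mathfrak{m}$), for which I would use the walk expansion $(A^k)_{11}=\sum_{i_2,\dots,i_k}A_{1i_2}A_{i_2i_3}\cdots A_{i_k1}$ and sum entrywise to obtain $c=\sum_{W}\prod_{(a,b)}S_{m_{ab}(W)}(L)$, where $W$ ranges over closed walks of length $k$ based at vertex $1$ on the complete digraph with loops on $d$ vertices, $m_{ab}(W)$ counts traversals of the edge $(a,b)$, and $S_0(L)=|L|$. Since $|L|\equiv 0$ in the residue characteristic, only walks covering all $d^2$ edges survive, and each surviving product has $d^2\ge 4$ factors of the form $S_{m}(L)$ with $m\ge 1$. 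A short computation shows $\overline{S_m(L)}=|\mathfrak{m}|\,S_m(\mathbb{F}_q)=0$ in $\mathbb{F}_q$, so $S_m(L)\in\mathfrak{m}$ for all $m$; hence every surviving product lies in $\mathfrak{m}^{d^2}$, and $c=0$ whenever $\mathfrak{m}^{d^2}=0$. This disposes of all non-field local factors of small nilpotency (e.g. $\mathbb{Z}/4\mathbb{Z}$, $\mathbb{F}_2[x]/(x^2)$) and, together with the field case, of every $d\ge 3$.

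The main obstacle is the remaining regime of \emph{deep nilpotency}, such as $\mathbb{Z}/2^{j}\mathbb{Z}$ with $j$ large, where $\mathfrak{m}^{d^2}\neq 0$ and the crude ideal-membership bound no longer forces vanishing: one must instead evaluate the covering-walk sum $\sum_{W}\prod_{(a,b)}S_{m_{ab}(W)}(L)$ modulo the residue characteristic, plugging in the explicit values of $S_m(L)$ furnished by the main computation of this paper and exhibiting the cancellations that make the total vanish. The same delicacy is what, already over fields with $(q-1)\mid k$, underlies the precise $\pmod 6$ threshold, and controlling these weighted closed-walk counts uniformly in $d$ and across all local rings — rather than case by case — is exactly the point where a complete argument is missing, which is presumably why the statement is recorded here only as a conjecture.
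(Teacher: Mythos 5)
This statement is not proved in the paper at all: it is recorded as a conjecture imported from \cite{for2}, so there is no proof of record to match your argument against, and your own closing paragraph concedes that your attempt is not a proof either. The correct verdict is therefore ``genuine gap,'' and the gap is essentially the one you name. To give you credit where due: the partial reductions are sound. Conjugation by the transvections $I+e_{ij}$ does force $S_k(\mathbb{M}_d(R))=cI_d$; the relations $(u^k-1)c=0$ and $((-1)^k-1)c=0$ are correct; the splitting over local factors is standard for finite commutative \emph{unital} rings; the covering-walk expansion is valid, including the observation that $S_0(L)=|L|=0$ in $L$ (the characteristic divides $|L|$), and the reduction $\overline{S_m(L)}=|\mathfrak{m}|\,S_m(\mathbb{F}_q)=0$ correctly places $S_m(L)\in\mathfrak{m}$ and hence $c\in\mathfrak{m}^{d^2}$.

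Beyond the gap you admit, two concrete defects need flagging. First, your sentence claiming that the ideal-membership bound, ``together with the field case,'' disposes ``of every $d\geq 3$'' is false as written and is contradicted by your own final paragraph: the bound requires $\mathfrak{m}^{d^2}=0$, and increasing $d$ does not help against a fixed local ring of large nilpotency index --- $L=\mathbb{Z}/p^{j}\mathbb{Z}$ with $j>d^2$ escapes it for every $d$. For the same reason your assembly step (``a nonzero answer can occur only when exactly one local factor is $\mathbb{F}_2$, i.e.\ $|R|\equiv 2\pmod 4$'') is not established even as a necessity claim, since it presupposes vanishing over \emph{all} non-field local factors; e.g.\ $L=\mathbb{Z}/3^{10}\mathbb{Z}$, $d=2$, $k$ even with $2\mid k$ is untouched by the scaling relation, the sign relation, and the ideal bound alike. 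Second, condition iii) of the conjecture is automatic for unital $R$ with $|R|\equiv 2\pmod 4$ (the $2$-part is then $\mathbb{Z}/2\mathbb{Z}$ and $e$ is its identity, hence idempotent), so that clause has content only for rings \emph{without} identity --- precisely the case your entire apparatus ($GL_d(R)$, scaling by units, decomposition into local rings with residue fields) cannot address. A complete argument would have to evaluate the weighted covering-walk sums $\sum_W\prod_{(a,b)}S_{m_{ab}(W)}(L)$ exactly, not merely bound them in $\mathfrak{m}^{d^2}$, and to extend the structure theory to the non-unital setting; both steps are missing, which is consistent with the statement standing as an open conjecture here.
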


In this paper, we completely solve the problem of computing $S_k(R)$ for any finite commutative unital ring. To do so, we first recall some well-known ring-theoretic facts. If $R$ is a finite commutative unital ring with $\textrm{char}(R)=n=p_1^{t_1}\cdots p_l^{t_l}$, then we have a decomposition of $R$ as a direct product of rings
\begin{equation}\label{des1}R\cong R_1\times\cdots\times R_l,\end{equation}
where $\textrm{char}(R_i)=p_i^{t_i}$. Moreover, for every $i\in\{1,\dots,l\}$, the ring $R_i$ can be seen as a $\mathbb{Z}/p_i^{t_i}\mathbb{Z}$-module and hence it can be decomposed as a direct sum of cyclic modules
\begin{equation}\label{des2}R_i\cong R_{i,1}\oplus\cdots\oplus R_{i,m_i},\end{equation}
with $\textrm{ann}(R_{i,j})=(p_i^{t_j})$ and $1\leq t_j\leq t_i$.

\section{The prime-power characteristic case}

Decomposition (\ref{des1}) above implies that, in order to get a general result, we can restrict ourselves to the prime-power characteristic case. Hence, throughout this section $R$ will be a finite commutative unital ring with $\textrm{char}(R)=p^t$.

Due to decomposition (\ref{des2}), $R$ is the direct sum of cyclic modules. If $R$ is itself a cyclic $\mathbb{Z}/p^t\mathbb{Z}$-module, then $R\cong\mathbb{Z}/p^t\mathbb{Z}$ and Proposition \ref{pre} ii) applies to obtain that $S_k(R)=-p^{t-1}$ if $p-1 \mid k$ and $S_k(R)=0$ otherwise. Thus, we will focus on the non-cyclic case.

First of all, we will prove that if $t>1$; i.e., if the characteristic is a prime-power but not a prime then $S_k(R)=0$.

\begin{prop}\label{nocic}
Let $R$ be a finite commutative unital ring such that $\textrm{char}(R)=p^t$ with $t>1$. If $R$ is not a cyclic $\mathbb{Z}/p^t\mathbb{Z}$-module, then $S_k(R)=0$ for every $k\geq 1$.
\end{prop}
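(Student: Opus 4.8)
The plan is to exploit the additive structure furnished by decomposition (\ref{des2}) together with the multinomial theorem, reducing everything to the one-variable sums $S_j(\mathbb{Z}/p^{t_a}\mathbb{Z})$ already recorded in Proposition \ref{pre} ii). First I would fix an additive decomposition $(R,+)=\bigoplus_{a=1}^m (\mathbb{Z}/p^{t_a}\mathbb{Z})\,g_a$ coming from (\ref{des2}), where $m\ge 2$ precisely because $R$ is non-cyclic. Since the additive order of $1$ equals the exponent $p^t$ of $(R,+)$ (for any $r$ one has $\mathrm{ord}(r)\mid\mathrm{ord}(1)$), an element of maximal order is a summand generator, so I may take $g_1=1$ and $\max_a t_a=t\ge 2$. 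Each $r\in R$ is then uniquely $r=\sum_a c_a g_a$ with $0\le c_a<p^{t_a}$, and expanding $r^k$ multinomially and summing the independent variables $c_a$ gives
$$S_k(R)=\sum_{\substack{u_1+\cdots+u_m=k\\ u_a\ge 0}}\binom{k}{u_1,\dots,u_m}\Big(\prod_{a=1}^m g_a^{u_a}\Big)\prod_{a=1}^m s_a(u_a),\qquad s_a(u):=\sum_{c=0}^{p^{t_a}-1}c^{u}\in\mathbb{Z}.$$

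Next I would extract two pieces of information from each term. On the coefficient side, Proposition \ref{pre} ii) applied to $n=p^{t_a}$ shows that $s_a(u)\equiv 0$ or $-p^{t_a-1}\pmod{p^{t_a}}$ for $u\ge 1$, while $s_a(0)=p^{t_a}$; in every case $v_p(s_a(u))\ge t_a-1$ for $u\ge 1$ and $v_p(s_a(0))=t_a$ (this is uniform in $p$, the prime $p=2$ causing no trouble since its only effect is to force more of the $s_a(u)$ to vanish). Writing $P=\{a:u_a\ge 1\}$ and $c=\sum_a t_a=\log_p|R|$, this yields $v_p\big(\prod_a s_a(u_a)\big)\ge c-|P|$. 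On the monomial side, the element $y=\prod_a g_a^{u_a}$ is annihilated by $p^{t_{a_0}}$ for every $a_0\in P$ (because $p^{t_{a_0}}g_{a_0}=0$), so its additive order divides $p^{\mu}$ with $\mu:=\min_{a\in P}t_a$.

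The heart of the matter is then the purely combinatorial inequality $c-|P|\ge\mu$ for every nonempty $P\subseteq\{1,\dots,m\}$: combining it with the two bounds above gives $\big(\prod_a s_a(u_a)\big)\,y=0$, so every term of the sum vanishes and $S_k(R)=0$. I expect this inequality to be the main obstacle, and it is exactly where both hypotheses enter. For $|P|\le m-1$ it follows immediately from $t_a\ge 1$; the delicate case is $P=\{1,\dots,m\}$, where $c-|P|=\sum_a(t_a-1)$ must dominate $\mu=\min_a t_a$, which I would settle by distinguishing whether $\mu\le t-1$ or all $t_a$ equal $t$, the latter requiring precisely $m\ge 2$ and $t\ge 2$. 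Indeed the inequality fails in the cyclic case $m=1$ and in the prime-characteristic case $t=1$, consistent with $S_k$ being nonzero for $\mathbb{Z}/p^t\mathbb{Z}$ and for finite fields. A secondary point demanding care is the interaction between the integer coefficients $s_a(u_a)$ and the module action on $y$, which the $p$-adic valuation formulation above is designed to handle cleanly.
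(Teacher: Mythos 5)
Your proof is correct, and while it is built from the same two ingredients as the paper's argument --- the cyclic decomposition (\ref{des2}) and the values of $S_u(\mathbb{Z}/p^{t_a}\mathbb{Z})$ from Proposition \ref{pre} ii) --- the execution is genuinely different. The paper expands binomially, settles $m=2$ by hand (using that $t_1\geq 2$ or $t_2\geq 2$), and then inducts on the number of cyclic summands; you instead expand the $k$-th power multinomially over all $m$ summands at once and kill every term through the single valuation estimate $v_p\bigl(\prod_a s_a(u_a)\bigr)\geq c-|P|$ combined with the fact that $p^{\mu}$, $\mu=\min_{a\in P}t_a$, annihilates the monomial $\prod_a g_a^{u_a}$. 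Your combinatorial inequality $c-|P|\geq\mu$ is exactly where both hypotheses enter, and your case analysis of it ($|P|\leq m-1$; $P$ full with either $\mu\leq t-1$ or all $t_a=t$, the latter using $m\geq 2$ and $t\geq 2$) is complete; the uniform treatment of $p=2$ via ``$s_a(u)\equiv 0$ or $-p^{t_a-1}\pmod{p^{t_a}}$'' is also right, since Proposition \ref{pre} ii) only forces extra vanishing for odd $u>1$ when $2^{t_a}\equiv 0\pmod 4$. Your route buys something concrete: in the paper's inductive step the inner sum $\sum(a_2x_2+\cdots+a_mx_m)^{k-s}$ ranges over $R_2\oplus\cdots\oplus R_m$, which is a direct summand only as a $\mathbb{Z}/p^t\mathbb{Z}$-module and need not be a unital subring, so the stated induction hypothesis does not literally apply to it and that step is left informal; your one-shot multinomial computation with explicit $p$-adic bookkeeping sidesteps this entirely and is, if anything, tighter than the published proof. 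The normalization $g_1=1$ (via the standard fact that an element of maximal order spans a direct summand) is correct but unnecessary: $\max_a t_a=t$ already follows because $\mathrm{char}(R)=p^t$ is the exponent of $(R,+)$.
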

\begin{proof}
Due to decomposition (\ref{des2}) we have that $R\cong R_1\oplus\cdots\oplus R_m$ with $m\geq 2$, $R_i$ cyclic and $\textrm{ann}(R_i)=(p^{t_i})$ with $1\leq t_i\leq t$.

Hence, if we denote by $x_i$ a generator of $R_i$, then every element of $R$ can be uniquely written in the form $a_1 x_1+\cdots+a_m x_m$ with $a_i \in \{0,\dots,p^{t_i}-1\}$ for every $i\in\{1,\dots,m\}$. Thus,
$$S_k(R)=\sum_{a_i=0}^{p^{t_i}-1}(a_1 x_1+\cdots+a_m x_m)^k=\sum_{s=0}^k \sum_{a_i=0}^{p^{t_i}-1} \binom {k}{s}(a_1 x_1)^s (a_2x_2+\cdots+a_m x_m)^{k-s}$$
and we will proceed inductively.

First of all, assume that $m=2$. In this case,
$$S_k(R)=\sum_{s=0}^k \binom {k}{s}\sum_{a_1=0}^{p^{t_1}-1}\sum_{a_2=0}^{p^{t_2}-1} (a_1 x_1)^s (a_2x_2)^{k-s}$$
and note that either $t_1\geq 2$ or $t_2\geq 2$, for if $t_1,t_2<2$ then $t=1$ which is not possible.

For every $s\in\{0,\dots, k\}$, denote by $\displaystyle A(s):=\sum_{a_1=0}^{p^{t_1}-1}\sum_{a_2=0}^{p^{t_2}-1}(a_1 x_1)^s (a_2x_2)^{k-s}$. Now, since $p^{t_i}x_i=0$ we have that
$$A(0)=\sum_{a_1=0}^{p^{t_1}-1}\sum_{a_2=0}^{p^{t_2}-1}(a_2x_2)^{k}=p^{t_2}\sum_{a_2=0}^{p^{t_2}-1}(a_2x_2)^{k}=0,$$
$$A(k)=\sum_{a_1=0}^{p^{t_1}-1}\sum_{a_2=0}^{p^{t_2}-1}(a_1 x_1)^k=p^{t_1}\sum_{a_1=0}^{p^{t_1}-1}(a_1 x_1)^k=0.$$
On the other hand, if $0<s<k$,
$$A(s)=\sum_{a_1=0}^{p^{t_1}-1}(a_1 x_1)^s \sum_{a_2=0}^{p^{t_2}-1}(a_2x_2)^{k-s}$$
and due to Proposition \ref{pre} ii) we have that $\displaystyle\sum_{a_1=0}^{p^{t_1}-1}(a_1 x_1)^s$ is either $0$ or $-p^{t_1-1}x_1^s$ and also that $\displaystyle\sum_{a_2=0}^{p^{t_2}-1}(a_2x_2)^{k-s}$ is either $0$ or $-p^{t_2-1}x_2^{k-s}$. Consequently, it follows that either $A(s)=0$ or $A(s)=p^{t_1+t_2-2}x_1^sx_2^{k-s}$ but in this latter case, since either $t_1\geq 2$ or $t_2\geq 2$, it also follows that $A(s)=0$ as claimed.

Now, if $m>2$,
$$S_k(R)=\sum_{s=0}^k \binom {k}{s}\sum_{a_1=0}^{p^{t_1}-1} (a_1 x_1)^s \sum_{\substack{a_i=0\\ i\neq 1}}^{p^{t_i}-1}(a_2x_2+\cdots+a_m x_m)^{k-s}$$
and for every $0\leq s\leq k$ at least one of the summatories appearing in the latter expression is 0 by induction hypothesis.
\end{proof}

The following series of technical lemmata will be useful when we consider the case $t=1$ in the sequel. In what follows $\mathbb{F}_q$ will denote the field with $q$ elements.

\begin{lem}\label{ndi1}
Let $R$ be a finite commutative unital $\mathbb{F}_q$-algebra with $q>2$ such that there exists $x\in R-\{0\}$ with $x^2=0$. Then, $S_k (R)=0$ for every $k$.
\end{lem}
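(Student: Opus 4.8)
The plan is to exploit the $\mathbb{F}_q$-algebra structure together with the special nilpotent $x$ by organizing the sum over cosets of the one-dimensional subspace $\mathbb{F}_q x\subseteq(R,+)$. Before that, I would record two elementary facts. First, since $R$ is an $\mathbb{F}_q$-algebra, its prime field has characteristic $p$ with $q$ a power of $p$, so $q\cdot 1_R=0$ in $R$. Second, because $q>2$ the sum of all field elements vanishes, $\sum_{\lambda\in\mathbb{F}_q}\lambda=0$; this follows by choosing $c\in\mathbb{F}_q\setminus\{0,1\}$ (possible precisely because $q>2$) and observing that $c\sum_\lambda\lambda=\sum_\lambda\lambda$, since $\lambda\mapsto c\lambda$ permutes $\mathbb{F}_q$, whence $(c-1)\sum_\lambda\lambda=0$ with $c-1$ a unit.

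Next I would set up the partition. Since $x\neq 0$ and scalar multiplication by a nonzero field element is injective, the set $\mathbb{F}_q x=\{\lambda x:\lambda\in\mathbb{F}_q\}$ is an additive subgroup of $(R,+)$ of order exactly $q$. I would then decompose $R$ into the disjoint cosets $r+\mathbb{F}_q x$ and compute the contribution of a single coset. Because $R$ is commutative and $x^2=0$, the binomial expansion collapses: for every $r\in R$ and every $\lambda\in\mathbb{F}_q$ one has $(r+\lambda x)^k=r^k+k\lambda r^{k-1}x$, all terms involving $x^j$ with $j\geq 2$ being zero.

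Summing over one coset then gives $\sum_{\lambda\in\mathbb{F}_q}(r+\lambda x)^k=q\,r^k+k\,r^{k-1}x\sum_{\lambda\in\mathbb{F}_q}\lambda=0+0=0$, invoking the two facts recorded above. Adding up the (identically zero) contributions of all cosets yields $S_k(R)=0$ for every $k\geq 1$. Note that the argument is uniform in $k$, so no case distinction on the exponent is required.

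The proof involves no serious computation, so I do not expect a real obstacle; the only conceptual step is deciding to quotient by the line $\mathbb{F}_q x$ rather than trying to track the full sum directly. The two places where the hypotheses are genuinely used are worth highlighting: the collapse of the binomial expansion depends on $x^2=0$, and the vanishing of each coset sum depends on $q>2$ (for $q=2$ one would instead get $\sum_{\lambda}\lambda=1\neq 0$, and the coset sums need not vanish, which is exactly why the restriction $q>2$ appears in the statement).
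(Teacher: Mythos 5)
Your proof is correct and follows essentially the same route as the paper: the paper fixes a vector-space complement $\overline{R}$ with $R=\langle x\rangle\oplus\overline{R}$ and writes $S_k(R)=\sum_{a\in\mathbb{F}_q}\sum_{t\in\overline{R}}(t^k+k t^{k-1}ax)$, which is exactly your coset decomposition with respect to the line $\mathbb{F}_q x$, and it kills the two resulting terms by the same two facts you use, namely $q\cdot 1_R=0$ and $\sum_{\lambda\in\mathbb{F}_q}\lambda=0$ for $q>2$. Your only (harmless) additions are making the coset bookkeeping explicit instead of choosing a complement, and proving $\sum_{\lambda}\lambda=0$ from scratch where the paper cites its Proposition 1 i).
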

\begin{proof}
Given $x\in R-\{0\}$ with $x^2=0$ we can decompose $R$ as the direct sum of vector subspaces $R=<x> \oplus \overline{R}$. Thus,
\begin{align*}S_k(R)&=\sum_{a \in \mathbb{F}_q } \sum_{t\in \overline{R}} (ax + t)^k=\sum_{a \in \mathbb{F}_q }  \sum_{t\in \overline{R}}(t^k+kt^{k-1}ax)=\\
&=q \sum_{t\in \overline{R}}t^k+ \sum_{a \in \mathbb{F}_q }a \sum_{t\in \overline{R}}k t^{k-1}x=0,\end{align*}
because, for $q>2$, it holds that $\displaystyle\sum_{a \in \mathbb{F}_q }a =0$ due to Proposition \ref{pre} i).
\end{proof}

\begin{lem}\label{ndi2}
Let $R$ be a finite commutative unital $\mathbb{F}_q$-algebra such that there exist a free family $\{x,y\}$ with $xy=0$. Then, $S_k (R)=0$ for every $k$.
\end{lem}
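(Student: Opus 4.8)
The plan is to mimic the vector-space decomposition idea of Lemma \ref{ndi1}, but to sum over the coefficients of $x$ and $y$ in a carefully chosen order so as to avoid any appeal to $\sum_{a\in\mathbb{F}_q}a=0$ (which fails when $q=2$) and instead to exploit only the always-valid identity $\sum_{a\in\mathbb{F}_q}1=q=0$ together with the relation $xy=0$. Since $\{x,y\}$ is a free family, I would first extend it to an $\mathbb{F}_q$-basis of $R$ and write $R=\langle x\rangle\oplus\langle y\rangle\oplus W$ as $\mathbb{F}_q$-vector spaces, so that every element of $R$ is uniquely $ax+by+w$ with $a,b\in\mathbb{F}_q$ and $w\in W$. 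Then $S_k(R)=\sum_{a,b\in\mathbb{F}_q}\sum_{w\in W}(ax+by+w)^k$.

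The key step is to perform the inner sum over $b$ first. Fixing $a$ and $w$ and setting $c=ax+w$, the binomial theorem gives $\sum_{b\in\mathbb{F}_q}(c+by)^k=qc^k+\sum_{s=1}^k\binom{k}{s}\bigl(\sum_{b\in\mathbb{F}_q}b^s\bigr)y^sc^{k-s}$. Here $qc^k=0$ in characteristic $p$, and by Proposition \ref{pre} i) the inner field power sum $\sum_{b\in\mathbb{F}_q}b^s$ equals $-1$ when $(q-1)\mid s$ and $0$ otherwise. Hence this partial sum collapses to $-\sum_{1\le s\le k,\,(q-1)\mid s}\binom{k}{s}y^s(ax+w)^{k-s}$, and so $S_k(R)=-\sum_{1\le s\le k,\,(q-1)\mid s}\binom{k}{s}\sum_{a\in\mathbb{F}_q}\sum_{w\in W}y^s(ax+w)^{k-s}$.

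Now the relation $xy=0$ enters. Since $R$ is commutative, $y^sx^j=y^{s-1}(yx)x^{j-1}=0$ whenever $s,j\ge1$, so in the expansion of $y^s(ax+w)^{k-s}$ every term carrying a positive power of $x$ dies and we are left with $y^sw^{k-s}$, which no longer depends on $a$. Summing this $a$-independent quantity over $a\in\mathbb{F}_q$ therefore produces a factor $\sum_{a\in\mathbb{F}_q}1=q=0$ for each fixed $w$ and each $s$, whence $S_k(R)=0$.

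The one thing to get right is the order of summation together with the observation that after summing over $b$ the surviving terms all carry a factor $y^s$ with $s\ge1$; it is precisely this factor that, via $xy=0$, removes every dependence on $a$ and lets the trivial identity $q=0$ finish the argument. This is also the feature that makes the statement hold for every $q$, in contrast with Lemma \ref{ndi1}, where the restriction $q>2$ was forced by the use of $\sum_{a\in\mathbb{F}_q}a=0$. No step is genuinely hard once this summation order is chosen; the only care needed is the bookkeeping of the vanishing $s=0$ term and of the mixed $x$--$y$ terms.
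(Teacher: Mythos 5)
Your proof is correct and takes essentially the same route as the paper's: decompose $R=\langle x\rangle\oplus\langle y\rangle\oplus\overline{R}$ as vector spaces, use $xy=0$ to annihilate every mixed $x$--$y$ term, and kill the survivors via a factor $q=0$ coming from the coefficient on which they no longer depend. The only cosmetic difference is the order of operations: you sum over $b$ first and evaluate $\sum_{b\in\mathbb{F}_q}b^s$ via Proposition \ref{pre} i), whereas the paper applies $xy=0$ at the outset, writing $(ax+by+t)^k=t^k+\sum_{s=1}^k\binom{k}{s}(a^sx^s+b^sy^s)t^{k-s}$, and then extracts the factor $q$ directly without ever evaluating a field power sum.
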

\begin{proof}
Given a free family $\{x,y\}$ with $xy=0$ we can decompose $R$ as the direct sum of vector subspaces $R=<x>\oplus <y> \oplus \overline{R}$. Thus,
\begin{align*}S_k(R)&=\sum_{a \in \mathbb{F}_q }\sum_{b\in \mathbb{F}_q}  \sum_{t\in \overline{R}} (ax+by+ t)^k=\\
&=\sum_{a \in \mathbb{F}_q }\sum_{b\in \mathbb{F}_q}  \sum_{t\in \overline{R}} \left( t^k + \sum_{s=1}^k \binom{k}{s}(a^sx^s+b^sy^s)t^{k-s}\right)=\\
&=q^2\sum_{t\in\overline{R}} t^k+ q\sum_{a \in \mathbb{F}_q } \sum_{t\in \overline{R}} \sum_{s=1}^k \binom{k}{s} a^sx^st^{k-s}+q\sum_{b \in \mathbb{F}_q } \sum_{t\in \overline{R}} \sum_{s=1}^k \binom{k}{s} b^sy^st^{k-s}=0\end{align*}
as claimed.
\end{proof}

\begin{lem}\label{pol2}
Let $R\cong(\mathbb{Z}/2\mathbb{Z})[x]/(x^2)$ and let $u$ be the only non-zero idempotent of $R$. Then, $S_k(R)=u$ if $k>1$ is odd and $S_k(R)=0$ otherwise.
\end{lem}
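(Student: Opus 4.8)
The plan is to exploit the fact that $R$ is tiny: as an $\mathbb{F}_2$-vector space it is two-dimensional, so $R=\{0,\,1,\,x,\,1+x\}$ has only four elements, and the power sum $S_k(R)=\sum_{r\in R}r^k$ can be evaluated directly by computing each $k$-th power. Before doing so I would isolate the two arithmetic facts that drive every computation: the characteristic is $2$, so $2=0$ in $R$, and $x$ is nilpotent with $x^2=0$, whence $x^k=0$ for every $k\geq 2$ while $x^1=x$.

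The single step that needs a short argument is the evaluation of $(1+x)^k$. Expanding by the binomial theorem and discarding all terms of $x$-degree at least $2$ (they vanish because $x^2=0$) leaves $(1+x)^k=1+kx$, and reducing the integer coefficient $k$ modulo $2$ gives $(1+x)^k=1$ for even $k$ and $(1+x)^k=1+x$ for odd $k$. Everything else follows immediately from the two facts above.

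With these evaluations in hand I would assemble $S_k(R)=0^k+1^k+x^k+(1+x)^k$ and split according to $k$. For $k=1$ one obtains $0+1+x+(1+x)=2+2x=0$; for even $k\geq 2$ one obtains $0+1+0+1=2=0$; and for odd $k>1$ one obtains $0+1+0+(1+x)=2+x=x$. Hence $S_k(R)$ is the distinguished non-zero element $u$ (which this computation identifies as $x$, the unique non-zero element of $R$ with square $0$) exactly when $k>1$ is odd, and $S_k(R)=0$ in every other case, which is the assertion of the lemma.

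Because $R$ has only four elements, there is essentially no obstacle: the proof is a finite verification. The only points demanding care are the consistent use of $2=0$ (so that the $k=1$ and even cases collapse to $0$) and the parity reduction of the binomial coefficient in $(1+x)^k$, since it is precisely this parity, together with the vanishing $x^k=0$ for $k\geq 2$, that produces the odd-versus-even dichotomy recorded in the statement.
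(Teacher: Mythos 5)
Your proof is correct and takes essentially the same approach as the paper: both enumerate the four elements $\{0,1,x,1+x\}$ and evaluate $S_k(R)=0^k+1^k+x^k+(1+x)^k$ using $2=0$ and $x^2=0$, and your separate treatment of $k=1$ is in fact slightly more careful than the paper's one-line formula $S_k(R)=ku$, which is literally valid only for $k\geq 2$. Note also that your identification of $u$ with the non-zero nilpotent $x$ is the intended reading: the statement's ``idempotent'' is evidently a typo for ``nilpotent'' (the only non-zero idempotent of $R$ is $1$, for which the lemma would be false), as confirmed by the paper's later use of $u$ in its main theorem.
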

\begin{proof}
In this situation, $R=\{0,1,u,1+u\}$ and since $\textrm{char}(R)=2$, we have that
$$S_k(R)=0^k+1^k+u^k+(1+u)^k=ku$$
and the result follows.
\end{proof}

Now, we are in the conditions to prove the main result of this section.

\begin{teor}\label{teorpp}
Let $R$ be a finite commutative unital ring of prime-power characteristic. Then, $S_k(R)\neq 0$ if and only if one of the following conditions hold.
\begin{itemize}
\item[i)] $|R|-1 \mid k$ and $R$ is a field.
\item[ii)] $R \cong \mathbb{Z}/p^s\mathbb{Z}$ and $p-1 \mid k$.
\item[iii)] $R\cong(\mathbb{Z}/2\mathbb{Z})[x]/(x^2)$ and $k>1$ is odd.
\end{itemize}
\end{teor}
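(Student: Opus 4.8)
The plan is to reduce, in two stages, to the cases already handled by Proposition \ref{pre} and by the Lemmas above, the organizing principle being that only local rings can contribute, and that a local ring which is not a field contributes only in the degenerate situation of Lemma \ref{pol2}.

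First I would dispose of non-prime characteristic. Write $\textrm{char}(R)=p^t$. If $t>1$ and $R$ is not cyclic as a $\mathbb{Z}/p^t\mathbb{Z}$-module, then $S_k(R)=0$ by Proposition \ref{nocic}; if it is cyclic then $R\cong\mathbb{Z}/p^t\mathbb{Z}$ and $S_k(R)$ is given by Proposition \ref{pre} ii), which is case ii) (with the caveat, visible in Proposition \ref{pre} ii), that for $p=2$ and $t\geq 2$ one must further require $k$ even or $k=1$; this is the one point where the three stated conditions should be read through Proposition \ref{pre}). There remains $t=1$, i.e. $R$ is an $\mathbb{F}_p$-algebra. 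Being finite and commutative it is Artinian, so it splits as a product of finite local $\mathbb{F}_p$-algebras $R\cong R_1\times\cdots\times R_n$. The second reduction is that a nontrivial product cannot contribute: if $R\cong A\times B$ with $A,B\neq 0$ then, computing coordinatewise, $S_k(A\times B)=(|B|\,S_k(A),\,|A|\,S_k(B))$, and since $A,B$ are $\mathbb{F}_p$-algebras both $|A|,|B|$ are positive powers of $p$ while every element is $p$-torsion, so both entries vanish and $S_k(R)=0$. Hence we may assume $R$ is local; let $\mathfrak{m}$ be its maximal ideal and $\mathbb{F}_q$, $q=p^f$, its residue field. By the structure theory of finite equicharacteristic local rings, $R$ contains a coefficient field, so $R$ is an $\mathbb{F}_q$-algebra, which is what allows the Lemmas to apply.

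Now the local analysis, which is the heart of the matter. If $\mathfrak{m}=0$ then $R=\mathbb{F}_q$ and Proposition \ref{pre} i) gives case i). Suppose $\mathfrak{m}\neq 0$ and let $\ell\geq 2$ be its nilpotency index, so $\mathfrak{m}^{\ell}=0\neq\mathfrak{m}^{\ell-1}$. If $q>2$, choose $x\in\mathfrak{m}^{\ell-1}\setminus\{0\}$; then $x^2\in\mathfrak{m}^{2\ell-2}\subseteq\mathfrak{m}^{\ell}=0$, so Lemma \ref{ndi1} yields $S_k(R)=0$. If $q=2$ (forcing $p=2$), write $R=\mathbb{F}_2\oplus\mathfrak{m}$ as $\mathbb{F}_2$-spaces and split on $\dim_{\mathbb{F}_2}\mathfrak{m}$. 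When $\dim_{\mathbb{F}_2}\mathfrak{m}=1$, Nakayama forces $\mathfrak{m}^2=0$, whence $R\cong(\mathbb{Z}/2\mathbb{Z})[x]/(x^2)$ and Lemma \ref{pol2} gives case iii). When $\dim_{\mathbb{F}_2}\mathfrak{m}\geq 2$, I would produce a free family $\{x,y\}$ with $xy=0$ and invoke Lemma \ref{ndi2}: if $\ell=2$ any two independent elements of $\mathfrak{m}$ work since $\mathfrak{m}^2=0$; if $\ell\geq 3$ take $x\in\mathfrak{m}^{\ell-1}\setminus\{0\}$ (so $x\mathfrak{m}=0$) and any $y\in\mathfrak{m}$ independent from $x$, which exists because $\dim_{\mathbb{F}_2}\mathfrak{m}\geq 2$.

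The genuinely non-formal ingredient is the coefficient-field fact, needed precisely when $p=2$ and $q>2$, where the prime field $\mathbb{F}_2$ is too small for Lemma \ref{ndi1} (for $p$ odd one may instead run Lemma \ref{ndi1} directly over $\mathbb{F}_p$, since then $|\mathbb{F}_p|>2$, and no coefficient field is required). Everything else is bookkeeping once Propositions \ref{nocic}, \ref{pre} and Lemmas \ref{ndi1}, \ref{ndi2}, \ref{pol2} are in hand; the only remaining place demanding care is matching the literal conditions i)--iii) against the exact value supplied by Proposition \ref{pre} ii) for the ring $\mathbb{Z}/2^s\mathbb{Z}$.
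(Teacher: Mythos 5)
Your proof is correct, and although it terminates in the same lemmas as the paper (Proposition \ref{nocic} for $t>1$, Proposition \ref{pre} for the cyclic case, Lemmas \ref{ndi1}, \ref{ndi2} and \ref{pol2} in characteristic $p$), your organization of the prime-characteristic case is genuinely different and, frankly, tighter. The paper argues tersely that $S_k(R)\neq 0$ forbids ``two different zero-divisors'' and then forces ``idempotents'', jumping from there to $R\cong(\mathbb{Z}/2\mathbb{Z})[x]/(x^2)$; as written it is not immediate how this excludes, say, $\mathbb{F}_4[x]/(x^2)$ or $\mathbb{F}_2[x]/(x^3)$. You instead decompose the Artinian $\mathbb{F}_p$-algebra into local factors, kill nontrivial products by the coordinatewise formula (this is exactly the paper's Lemma \ref{chop}, which the paper only states in the next section and never invokes in this proof, combined with the observation that $|A|$ and $|B|$ are positive powers of $p$ acting on $p$-torsion), and then analyze a local ring through a socle element $x\in\mathfrak{m}^{\ell-1}$ and Nakayama; your case split covers precisely the rings the paper's sketch leaves implicit. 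The one ingredient you import beyond the paper's toolkit, a coefficient field $\mathbb{F}_q\subseteq R$ (needed only when $p=2<q$ so that Lemma \ref{ndi1} has $q>2$ available), is legitimate for finite local rings ($\mathfrak{m}$ is nilpotent, so Hensel lifting applies), though it is in fact avoidable: if the residue field is $\mathbb{F}_{2^f}$ with $f\geq 2$ and $\mathfrak{m}\neq 0$, then $\dim_{\mathbb{F}_2}\mathfrak{m}\geq\dim_{\mathbb{F}_2}(\mathfrak{m}/\mathfrak{m}^2)\geq f\geq 2$, so your own $\ell=2$ versus $\ell\geq 3$ construction over the prime field already yields a free pair with product zero for Lemma \ref{ndi2}. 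Finally, your caveat on condition ii) is not pedantry but a genuine correction to the statement: for $R\cong\mathbb{Z}/2^s\mathbb{Z}$ with $s\geq 2$ and $k>1$ odd, condition ii) holds (since $p-1=1\mid k$) yet $S_k(R)=0$ by Proposition \ref{pre} ii) (e.g.\ $S_3(\mathbb{Z}/4\mathbb{Z})=36\equiv 0\pmod 4$), so the ``if'' direction as literally stated---and the paper's one-line justification of it---fails in this case, and the conditions must indeed be read through Proposition \ref{pre} ii) exactly as you do.
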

\begin{proof}
If condition i) holds, then $S_k(R)\neq 0$ due to Proposition \ref{pre} i). If condition ii) holds, then $S_k(R)\neq 0$ due to Proposition \ref{pre} ii). If condition iii) holds, then $S_k(R)\neq 0$ due to Lemma \ref{pol2}.

Conversely, assume that $S_k(R)\neq 0$. Proposition \ref{nocic} implies that $R$ must be cyclic or it must have prime characteristic. If it is cyclic, then Proposition \ref{pre} i) or ii) applies (depending on wether $R$ is a field or not) and condition i) or ii) holds, respectively. If $R$ is not cyclic and has prime characteristic, Lemma \ref{ndi2} implies that $R$ cannot contain two different zero-divisors. Consequently, not being a field, $R$ must contain idempotents but in this case Lemma \ref{ndi1} implies that $\textrm{char}(R)=2$. But all the previous restrictions lead to $R\cong(\mathbb{Z}/2\mathbb{Z})[x]/(x^2)$ so Lemma \ref{pol2} applies and the result follows.
\end{proof}

As a consequence, we obtain the following corollary that gives a characterization of finite fields in terms of power sums. The proof is straightforward and we omit.

\begin{cor}\label{cor1}
Let $R$ be a finite commutative unital ring such that $|R|$ is a prime-power. Then, the following are equivalent.
\begin{itemize}
\item[i)] $R$ is a field.
\item[ii)] $S_k(R)=0$ or $S_k(R)=-1$  for every $k\geq 1$.
\item[iii)] $S_k(R)=-1$ for $k=|R|-1$.
\item[iv)] There exists $k\geq 1$ such that $S_k(R)=-1$.
\end{itemize}
\end{cor}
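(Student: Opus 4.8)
The plan is to prove Corollary~\ref{cor1} by establishing the cycle of implications
$\textrm{i)}\Rightarrow\textrm{iii)}\Rightarrow\textrm{iv)}\Rightarrow\textrm{i)}$
together with the equivalence $\textrm{i)}\Leftrightarrow\textrm{ii)}$, relying throughout on Theorem~\ref{teorpp} and Proposition~\ref{pre}~i). The underlying observation is that Theorem~\ref{teorpp} gives an exhaustive list of the situations in which a ring of prime-power order can have a nonzero power sum, and only one of those three cases---the field case---ever produces the value $-1$. This reduction is what makes all four conditions collapse onto ``$R$ is a field''.

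First I would handle $\textrm{i)}\Rightarrow\textrm{iii)}$: if $R$ is a field, then $|R|=q$ is a prime power and by Proposition~\ref{pre}~i) we have $S_k(\mathbb{F}_q)=-1$ exactly when $(q-1)\mid k$; taking $k=|R|-1=q-1$ gives $S_{|R|-1}(R)=-1$. The implication $\textrm{iii)}\Rightarrow\textrm{iv)}$ is immediate, since iii) exhibits a specific $k$ witnessing iv). The crux is $\textrm{iv)}\Rightarrow\textrm{i)}$: suppose $S_k(R)=-1$ for some $k$. Then in particular $S_k(R)\neq 0$, so $R$ falls under one of the three cases of Theorem~\ref{teorpp}. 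I would rule out cases ii) and iii). In case ii), $R\cong\mathbb{Z}/p^s\mathbb{Z}$ with $p-1\mid k$ and $S_k(R)=-p^{s-1}$; this equals $-1$ only when $s=1$, i.e.\ when $R\cong\mathbb{Z}/p\mathbb{Z}$ is itself a field, so we land back in i). In case iii), $R\cong(\mathbb{Z}/2\mathbb{Z})[x]/(x^2)$ and $S_k(R)=u$, the nonzero nilpotent-plus-one idempotent, which is never equal to $-1=1$ in this ring (indeed $u\neq 1$). Hence the only surviving possibility is case i), meaning $R$ is a field.

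Finally, for $\textrm{i)}\Leftrightarrow\textrm{ii)}$: if $R$ is a field, Proposition~\ref{pre}~i) shows $S_k(R)\in\{0,-1\}$ for every $k$, giving ii). Conversely, if ii) holds and we assume toward a contradiction that $R$ is not a field, then by Theorem~\ref{teorpp} the only way to get a nonzero value is through case ii) or iii) of that theorem; but those yield values $-p^{s-1}$ with $s\geq 2$ or $u$, neither of which lies in $\{0,-1\}$, contradicting ii). Thus $R$ must be a field.

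The main obstacle, such as it is, lies in being careful about what ``$-1$'' means in each ring: one must check that the nonzero power-sum values arising in the non-field cases of Theorem~\ref{teorpp} (namely $-p^{s-1}$ for $s\geq2$ and the idempotent $u$) genuinely differ from $-1$ in their respective rings. This is a finite and elementary check but is the real content of the argument; everything else is bookkeeping over the trichotomy supplied by Theorem~\ref{teorpp}. Since the paper itself calls the proof straightforward and omits it, I would keep the write-up brief, citing Theorem~\ref{teorpp} and Proposition~\ref{pre}~i) as the load-bearing inputs.
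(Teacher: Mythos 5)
Your cycle $\textrm{i)}\Rightarrow\textrm{iii)}\Rightarrow\textrm{iv)}\Rightarrow\textrm{i)}$ is sound, and it is essentially the argument the paper has in mind when it calls the proof straightforward and omits it: the checks that $-1\neq 0$ (since $|R|\geq 2$), that $-p^{s-1}\equiv -1 \pmod{p^s}$ forces $s=1$, and that the special value in case iii) of Theorem \ref{teorpp} differs from $-1=1$ are exactly the right load-bearing verifications. (One terminological point: in $(\mathbb{Z}/2\mathbb{Z})[x]/(x^2)$ the element $u$ is the nonzero \emph{nilpotent} $x$, not an idempotent --- the statement of Lemma \ref{pol2} misnames it, as the paper's later Theorem corrects --- and your phrase ``nilpotent-plus-one idempotent'' muddles this; but your actual check $u\neq 1$ is correct in substance.)

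However, your proof of $\textrm{ii)}\Rightarrow\textrm{i)}$ has a genuine gap: the contradiction you assert does not exist. You argue that if $R$ is not a field, the only nonzero values of $S_k(R)$ permitted by Theorem \ref{teorpp} are $-p^{s-1}$ ($s\geq 2$) or $u$, ``contradicting ii)'' --- but if $R$ falls under \emph{none} of the three cases of Theorem \ref{teorpp}, then $S_k(R)=0$ for every $k$, and condition ii) as literally stated (``$S_k(R)=0$ or $S_k(R)=-1$ for every $k\geq 1$'') is then satisfied vacuously, with no contradiction at all. Concretely, take $R=\mathbb{F}_2\times\mathbb{F}_2$: here $|R|=4$ is a prime power, $R$ is not a field, not isomorphic to $\mathbb{Z}/4\mathbb{Z}$, and not isomorphic to $(\mathbb{Z}/2\mathbb{Z})[x]/(x^2)$, so $S_k(R)=0$ for all $k$ (as one also sees directly, since every element is idempotent and each coordinate sums to $0$). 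Thus ii) holds while i) fails, so the implication is false under the literal reading of ii), and no patch to your argument can rescue it; the corollary's condition ii) must implicitly be read as requiring the value $-1$ to actually occur (e.g.\ ``$S_k(R)=-1$ when $(|R|-1)\mid k$ and $S_k(R)=0$ otherwise''), in which case $\textrm{ii)}\Rightarrow\textrm{iv)}$ is immediate and your cycle closes. A blind proof should have flagged this defect in the statement rather than manufacturing a contradiction at the one point where none arises.
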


\begin{rem}
Corollary \ref{cor1} provides an elementary (although inefficient) algorithm to determine if a polynomial with integer coefficients is irreducible over $\mathbb{Z}/p\mathbb{Z}$. Its computational complexity is exponential, thus it is worse than the fast already known algorithms for the factorization of polynomials \cite{RED}.  Moreover, it also determines if an ideal in $\mathbb{Z}/p\mathbb{Z}[x_1,\dots,x_m]$ is maximal.

For instance, let us consider $I=(1 + x^2 + y^2,-1 - x + y^2)\triangleleft\mathbb{Z}/3\mathbb{Z}[x,y]$. We will prove that this ideal is maximal. To do so, we compute the sum
$$\sum_{0<=a,b,c,d<3} (a+b x +c y + d x y)^{80},$$ doing repeatedly the substitutions $x^n\rightarrow x^{n-2}(2+2y^2)$ and $y^n\rightarrow y^{n-2}(1+x)$ when necessary until we arrive to an expression of the form $A+ B x + C y + D xy$. The ideal $I$ is maximal if and only if $A \equiv 2 \pmod 3$ and $B \equiv C \equiv D \equiv 0 \pmod 3$, which is the case.
\end{rem}

\section{The general case}

In the previous section we have focused on the case when the characteristic of the ring is a prime-power. Now, we will focus on the general case. Let $R$ be a finite commutative unital ring and assume that $|R|=p_1^{s_1}\cdots p_l^{s_l}$. This implies that $\textrm{char}(R)=p_1^{t_1}\cdots p_l^{t_l}$ with $1\leq t_i\leq s_i$ for every $i$. Note that $t_i=s_i$ if and only if $R_i$ isomorphic to $\mathbb{Z}/p_i^{s_i}$. In this situation, we define rings $R_i=R/p_i^{t_i}R$ for every $i\in\{1,\dots,l\}$. Note that $\textrm{char}(R_i)=p_i^{t_i}$ and, moreover, $R\cong R_1\times\cdots\times R_l$ is precisely the decomposition given in (\ref{des1}).

In this setting, given $k\geq 1$, let us define the sets
\begin{align*}
\mathcal{P}_k(R)&:=\{p_i: R_i \textrm{ is a field and } p_i ^{s_i}-1 \mid k \},\\
\overline{\mathcal{P}}_k(R)&:=\{p_i: R_i \textrm{ is isomorphic to }\mathbb{Z}/p_i^{s_i}\mathbb{Z}\textrm{ with $s_i>1$ and }p_i  -1 \mid k  \}.
\end{align*}

The following lemma is straightforward.

\begin{lem}\label{chop}
Let $R_1$ and $R_2$ be finite commutative unital ring and let $R=R_1\times R_2$ be its direct sum. Then,
$$S_k(R)=(|R_2|S_k(R_1),|R_1|S_k(R_2)).$$
\end{lem}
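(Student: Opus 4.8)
The plan is to prove the product formula $S_k(R)=(|R_2|S_k(R_1),|R_1|S_k(R_2))$ by a direct computation, exploiting the ring isomorphism $R\cong R_1\times R_2$ together with the componentwise nature of multiplication in a direct product. The key observation is that every element of $R$ is uniquely a pair $(r_1,r_2)$ with $r_1\in R_1$ and $r_2\in R_2$, and that powers are taken componentwise, so $(r_1,r_2)^k=(r_1^k,r_2^k)$. Hence the sum over $R$ factors as a sum over the pairs, and the first coordinate of $S_k(R)$ collects $r_1^k$ as $r_2$ ranges freely over $R_2$, while the second coordinate collects $r_2^k$ as $r_1$ ranges freely over $R_1$.

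Concretely, I would write
\begin{align*}
S_k(R)=\sum_{(r_1,r_2)\in R_1\times R_2}(r_1,r_2)^k
=\sum_{r_1\in R_1}\sum_{r_2\in R_2}(r_1^k,r_2^k).
\end{align*}
The first coordinate of this double sum is $\sum_{r_1\in R_1}\sum_{r_2\in R_2}r_1^k$, in which the inner sum over $r_2$ simply repeats the term $r_1^k$ exactly $|R_2|$ times, yielding $|R_2|\sum_{r_1\in R_1}r_1^k=|R_2|S_k(R_1)$. Symmetrically, the second coordinate equals $\sum_{r_2\in R_2}\sum_{r_1\in R_1}r_2^k=|R_1|\sum_{r_2\in R_2}r_2^k=|R_1|S_k(R_2)$. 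Combining the two coordinates gives exactly the claimed identity.

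There is essentially no hard part here, which matches the paper's description of the lemma as ``straightforward''; the only point requiring a moment of care is that $|R_2|S_k(R_1)$ must be read as the $|R_2|$-fold additive multiple of the element $S_k(R_1)\in R_1$ (and likewise for the other coordinate), rather than as an external scalar multiplication, and that these additive multiples are well defined because $R_1$ and $R_2$ are additive groups. I would also remark in passing that the statement extends immediately by induction to a finite direct product $R\cong R_1\times\cdots\times R_l$, giving $S_k(R)$ coordinate $i$ equal to $\bigl(\prod_{j\neq i}|R_j|\bigr)S_k(R_i)$, since this is precisely the form in which the lemma will be applied to the decomposition $(\ref{des1})$ in the general theorem that follows.
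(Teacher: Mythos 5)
Your proof is correct and is exactly the ``straightforward'' direct computation the paper has in mind (the authors omit the proof entirely): powers in a direct product are computed componentwise, so the double sum factors and each coordinate picks up the cardinality of the other factor as a multiplicity. Your cautionary note that $|R_2|S_k(R_1)$ means the additive multiple in $R_1$, and the inductive extension to $l$ factors, match precisely how the lemma is used in the proof of the main theorem.
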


Now, we are in the condition to prove the main result of the paper.

\begin{teor}
Let $R$ be a finite commutative unital ring with $|R|=p_1^{s_1}\cdots p_l^{s_l}$ and let $k\geq 1$ be an integer. Then, with the previous notation
\begin{itemize}
\item[i)] If $k$ is even, then
$$S_k(R)=-\left ( \sum_{p_i \in \mathcal{P}_k} \frac{|R|}{p_i^{s_i}}+ \sum_{p_i \in \overline{\mathcal{P}}_k} \frac{|R|}{p_i} \right).$$
\item[ii)] If $k>1$ is odd and $2 \in \mathcal{P}_k$, then
$$S_k(R)=-\frac{|R|}{2^{\nu_2(|R|)}}.$$
\item[iii)] If $k>1$ is odd and $2 \in \overline{\mathcal{P}}_k$, then
$$S_k(R)=-\frac{|R|}{2}.$$
\item[iv)] If $k>1$ is odd and $R_i \cong (\mathbb{Z}/2\mathbb{Z})[x]/(x^2)$ for some $i$, then
$$S_k(R)=u,$$
where $u$ is the only non-zero nilpotent element of $R$ such that $2u=0$.
\item[v)] If $k=1$ and $R_i \cong \mathbb{Z}/2\mathbb{Z}$ for some $i$, then
$$S_k(R)=-\frac{|R|}{2}.$$
\item[vi)] In any other case, $S_k(R)=0$.
\end{itemize}
\end{teor}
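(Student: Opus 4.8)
The plan is to reduce the computation to the prime-power characteristic case already settled in Theorem \ref{teorpp}, and then reassemble the pieces through Lemma \ref{chop}. First I would iterate Lemma \ref{chop} along the decomposition $R \cong R_1 \times \cdots \times R_l$ from (\ref{des1}). A straightforward induction on $l$ shows that, under this isomorphism, the $i$-th coordinate of $S_k(R)$ equals $\frac{|R|}{|R_i|}\,S_k(R_i) = \frac{|R|}{p_i^{s_i}}\,S_k(R_i)$, since $|R_i| = p_i^{s_i}$. In this way the problem splits into computing each $S_k(R_i)$, and every $R_i$ has prime-power characteristic $p_i^{t_i}$.

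Next I would apply Theorem \ref{teorpp} to each factor. It guarantees that $S_k(R_i)$ vanishes unless one of the following holds: (a) $R_i$ is a field with $p_i^{s_i}-1 \mid k$, where Proposition \ref{pre} i) gives $S_k(R_i) = -1$; (b) $R_i \cong \mathbb{Z}/p_i^{s_i}\mathbb{Z}$ with $p_i - 1 \mid k$, where Proposition \ref{pre} ii) gives $S_k(R_i) = -p_i^{s_i-1}$; or (c) $R_i \cong (\mathbb{Z}/2\mathbb{Z})[x]/(x^2)$ with $k>1$ odd, where Lemma \ref{pol2} gives $S_k(R_i) = u_i$ for the non-zero nilpotent $u_i$. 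Substituting into the coordinate formula, case (a) produces the term $-|R|/p_i^{s_i}$ indexed by $\mathcal{P}_k$, case (b) produces $-|R|/p_i$ indexed by $\overline{\mathcal{P}}_k$ (the value $s_i=1$ being a field, hence already counted in $\mathcal{P}_k$), and case (c) produces $\frac{|R|}{4}\,u_i$.

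It then remains to organize these contributions by the parity of $k$. When $k$ is even, case (c) is impossible, and adding up all coordinates yields exactly part i). When $k>1$ is odd, any odd prime $p_i$ makes $p_i^{s_i}-1$ and $p_i-1$ both even and at least $2$, so neither divides $k$; hence only the factor sitting at the prime $2$ can survive, and as there is just one such factor, $S_k(R)$ collapses to that single coordinate. Its three admissible forms give parts ii), iii) and iv): in (a) one uses $p_{i_0}^{s_{i_0}} = 2^{\nu_2(|R|)}$, in (b) the cancellation $\frac{|R|}{2^{s}}\cdot 2^{s-1} = \frac{|R|}{2}$, and in (c) the fact that $|R|/4$ is odd together with $2u_{i_0}=0$, so that $\frac{|R|}{4}u_{i_0} = u_{i_0}$. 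This last element I would identify with the unique non-zero nilpotent $u$ of $R$ satisfying $2u=0$, checking that $2$ is a unit in every odd factor so no further $2$-torsion nilpotent arises. Finally, the case $k=1$ reduces to the cyclic situation of Proposition \ref{pre} ii) and gives part v), while any configuration not listed leaves every coordinate zero, which is part vi).

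The main obstacle I expect is not a difficult estimate but the disciplined case-matching: one must confirm that the mutually exclusive alternatives of Theorem \ref{teorpp}, once weighted by $|R|/|R_i|$ and summed over $i$, reproduce precisely the disjoint cases of the statement without overlap or gap. The two most delicate points are the clean disappearance of every odd-prime contribution when $k$ is odd, and the identification in part iv) of the scaled nilpotent $\frac{|R|}{4}u_{i_0}$ with the distinguished element $u$.
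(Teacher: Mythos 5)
Your proposal is correct and follows essentially the same route as the paper: apply Lemma \ref{chop} coordinatewise along the decomposition $R\cong R_1\times\cdots\times R_l$, invoke Theorem \ref{teorpp} to see which factors $S_k(R_i)$ survive, evaluate them via Proposition \ref{pre} i), ii) and Lemma \ref{pol2}, and sort the contributions by the parity of $k$. You even make explicit a couple of points the paper leaves implicit (that $|R|/4$ is odd in case iv) and that $2$ being a unit in the odd factors forces uniqueness of the nilpotent $u$), but the underlying argument is the same.
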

\begin{proof}
First of all, observe that Lemma \ref{chop} implies that
$$S_k(R)=\left(\frac{|R|}{p_1^{s_1}}S_k(R_1),\dots,\frac{|R|}{p_l^{s_l}}S_k(R_l)\right).$$
Now,
\begin{itemize}
\item[i)] If $k$ is even, Theorem \ref{teorpp} implies that $S_k(R_i)=0$ unless $R_i$ is a field with $|R_i|-1\mid k$ or $R_i\cong\mathbb{Z}/p_i^{s_i}\mathbb{Z}$ with $p_i-1\mid k$ (recall that in this case $\textrm{char}(R_i)=|R_i|$). Due to Proposition \ref{pre} i) and ii), in the first case $S_k(R_i)=-1$ while in the second case $S_k(R_i)=-p_i^{s_i-1}$. Hence, the result follows.
\item[ii)] If $k>1$ is odd and $2 \in \mathcal{P}_k$, we can assume without loss of generality that $p_1=2$. Then, Theorem \ref{teorpp} implies that $S_k(R_i)=0$ for every $i\geq 2$ and the result follows from Proposition \ref{pre} i).
\item[iii)] It is enough to reason like in ii) but the result follows from Proposition \ref{pre} ii).
\item[iv)] Again, the same idea as in ii) and iii) but the claim follows from Lemma \ref{pol2}.
\item[v)] The same as in ii), iii) and iv). Note that in this case $2\in\mathcal{P}_k$ and we can apply either Proposition \ref{pre} i) or ii).
\item[vi)] Theorem \ref{teorpp} states that the only cases in which $S_k(R_i)\neq0$ for some $i$ are precisely the previous ones.
\end{itemize}
\end{proof}

Given a finite commutative unital ring $R$, let $\mathfrak{i}:\mathbb{Z}\longrightarrow R$ be the unique ring homomorphism defined by $\mathfrak{i}(1)=1$. The previous result clearly implies that the power sum $S_k(R)$ is an element of $\textrm{Im}(\mathfrak{i})$ unless $R\cong \mathbb{Z}/{2}\mathbb{Z}[x]/(x^2) \times S$ with $k>1$ odd  and $|S|$ odd.

\begin{cor}
If $k>1$ is odd and $R$ contains a unique non-zero nilpotent element $u$ such that $2u=0$, then $S_k(R)=u$. Otherwise, $S_k(R)\in\mathfrak{i}(\mathbb{Z})$.
\end{cor}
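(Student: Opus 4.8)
The plan is to treat this corollary as an intrinsic repackaging of the dichotomy in the theorem above: the sum $S_k(R)$ fails to lie in $\mathfrak{i}(\mathbb{Z})$ in exactly one of the cases i)--vi), namely case iv), so the whole statement reduces to recognizing case iv) by a ring-theoretic property of $R$ and computing the sum there. First I would dispose of the ``otherwise'' clause directly from the explicit formulas: in cases i), ii), iii), v) the value of $S_k(R)$ is displayed as a negative integer, which is by definition $\mathfrak{i}$ of that integer, and in case vi) it is $0=\mathfrak{i}(0)$; hence $S_k(R)\in\mathfrak{i}(\mathbb{Z})$ whenever we are not in case iv).

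It remains to analyse case iv). Here the theorem gives a decomposition $R\cong A\times S$ with $A=(\mathbb{Z}/2\mathbb{Z})[x]/(x^2)$ and $|S|$ odd, coming from the splitting (\ref{des1}) into characteristic-primary components. I would first pin down $u$ intrinsically: a nilpotent element $(a,s)$ of $A\times S$ with $2(a,s)=0$ must have $s=0$, since multiplication by $2$ is a bijection on the odd-order additive group of $S$, while $a$ is a nilpotent of $A$ with $2a=0$, i.e. $a\in\{0,x\}$. Thus $(x,0)$ is the unique nonzero nilpotent with $2u=0$, and it satisfies $u\notin\mathfrak{i}(\mathbb{Z})$ because $m\cdot 1_A\in\{0,1\}$ for every integer $m$. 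To evaluate the sum I would combine Lemma \ref{chop} with Lemma \ref{pol2}: for $k>1$ odd one has $S_k(A)=x$, while $S_k(S)=0$ since $S$ has odd order and $k>1$ is odd, so case vi) of the theorem applies. Therefore $S_k(R)=(|S|\,S_k(A),\,|A|\,S_k(S))=(|S|\,x,0)=(x,0)=u$, the last equality because $|S|$ is odd and $2x=0$.

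The step I expect to be the genuine obstacle is the converse recognition: showing that the bare hypothesis ``$R$ has a unique nonzero nilpotent $u$ with $2u=0$'' really does force the structural shape of case iv). My plan is to push all the $2$-torsion into the $2$-primary component $R_1$ (for an odd prime $p_i$ the only $2$-torsion element of $R_i$ is $0$), decompose $R_1$ into local rings $\prod_j L_j$, and compute the $2$-torsion of its nilradical as the product of the $2$-torsions of the maximal ideals $\mathfrak{m}_j$; the uniqueness hypothesis then says exactly one factor contributes a nontrivial $2$-torsion element. The delicate point---and where real care is needed---is that this condition alone does not single out $(\mathbb{Z}/2\mathbb{Z})[x]/(x^2)$: the ring $\mathbb{Z}/2^e\mathbb{Z}$ with $e\geq 2$ also has a unique nonzero nilpotent $2$-torsion element (namely $2^{e-1}$), yet there $S_k(R)=0$. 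What distinguishes the case where $S_k(R)=u\neq 0$ is that the distinguished element lies \emph{outside} $\mathfrak{i}(\mathbb{Z})$, equivalently that the relevant local factor has characteristic $2$ with $\dim_{\mathbb{F}_2}\mathfrak{m}=1$; making this precise is what forces $L_j\cong(\mathbb{Z}/2\mathbb{Z})[x]/(x^2)$ and completes the identification with case iv).
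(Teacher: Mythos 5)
Your treatment of the ``otherwise'' clause and of case iv) is correct and coincides with what the paper actually does: the paper gives no separate proof of this corollary, deriving it directly from the main theorem (the sentence immediately preceding it makes the intended hypothesis explicit, namely $R\cong(\mathbb{Z}/2\mathbb{Z})[x]/(x^2)\times S$ with $k>1$ odd and $|S|$ odd), and your evaluation $S_k(R)=\bigl(|S|\,S_k(A),\,|A|\,S_k(S)\bigr)=(x,0)=u$ via Lemma \ref{chop}, Lemma \ref{pol2} and case vi) of the theorem applied to $S$ is exactly the intended computation. The contrapositive you use for the second clause is also sound: whenever the uniqueness hypothesis fails, $R$ is not of the shape in case iv), and in cases i), ii), iii), v), vi) the displayed value is an integer multiple of $1$, hence in $\mathfrak{i}(\mathbb{Z})$.

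The genuine problem is the converse recognition step, which you correctly flag but leave as a plan --- and which in fact cannot be carried out, because the corollary as literally stated is false. Take $R=\mathbb{Z}/4\mathbb{Z}$ and $k=3$: the unique non-zero nilpotent is $u=2$, it satisfies $2u=0$, so the hypothesis of the first clause holds, yet $S_3(R)=1+8+27\equiv 0\pmod{4}$, not $u$. This is your own $\mathbb{Z}/2^e\mathbb{Z}$ observation, which you under-sell: it is not a difficulty to be overcome but a counterexample to the literal statement. Moreover, your proposed repair --- requiring in addition $u\notin\mathfrak{i}(\mathbb{Z})$, equivalently that the relevant local factor have characteristic $2$ with $\dim_{\mathbb{F}_2}\mathfrak{m}=1$ --- is still insufficient: for $R=(\mathbb{Z}/2\mathbb{Z})[x]/(x^2)\times\mathbb{Z}/2\mathbb{Z}$ and $k=3$, the element $u=(x,0)$ is the unique non-zero nilpotent with $2u=0$ and lies outside $\mathfrak{i}(\mathbb{Z})=\{0,1\}$, yet Lemma \ref{chop} gives $S_3(R)=(2\cdot x,\,4\cdot 1)=0$, the even order of the companion factor annihilating the sum. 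What no intrinsic condition on $u$ alone can detect is precisely the requirement $|S|$ odd, i.e., that the $2$-primary component of $R$ in decomposition (\ref{des1}) is exactly $(\mathbb{Z}/2\mathbb{Z})[x]/(x^2)$. So the corollary must be read, as the paper implicitly reads it, as a restatement of case iv) of the theorem with the hypothesis understood structurally; your attempt to prove the literal ``if'' direction was doomed from the start, and your honest identification of the obstacle is in substance the identification of an imprecision in the paper's own statement rather than a gap you could have closed.
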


Now, we characterize those finite commutative unital rings such that the power sum $S_k(R)$ is a unit.

\begin{cor} \label{unit}
Let $R$ be a finite commutative unital ring and let $k\geq 1$ be an integer. Then, $S_k(R)$ is a unit if and only if the following conditions hold:
\begin{itemize}
\item[i)] There exist fields $F_1,\dots, F_l$ such that $R\cong F_1 \times \cdots \times F_l$.
\item[ii)] $\textrm{char}(F_i)\neq \textrm{char}(F_j)$ for every $i\neq j$.
\item[iii)] $(|F_i|-1)\mid k$ for every $1\leq i\leq l$.
\end{itemize}
\end{cor}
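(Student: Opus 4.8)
The plan is to use the main theorem to reduce the condition "$S_k(R)$ is a unit" to a concrete arithmetic statement about the local factors of $R$, and then argue both directions. First I would recall the Chinese-remainder decomposition $R\cong R_1\times\cdots\times R_l$ from (\ref{des1}), under which $S_k(R)=(\frac{|R|}{p_1^{s_1}}S_k(R_1),\dots,\frac{|R|}{p_l^{s_l}}S_k(R_l))$ as computed in the proof of the main theorem. The key observation is that an element of a finite product of rings is a unit if and only if each of its components is a unit in the corresponding factor $R_i$. Hence $S_k(R)$ is a unit precisely when, for every $i$, the $i$-th component $\frac{|R|}{p_i^{s_i}}S_k(R_i)$ is a unit in $R_i$.

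\begin{teor}[{\bf sketch of the forward direction}]\end{teor}

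\noindent I would analyze each factor $R_i$ using Theorem \ref{teorpp}. Since $\frac{|R|}{p_i^{s_i}}$ is a unit in $R_i$ (it is coprime to $p_i$, and $R_i$ has characteristic a power of $p_i$, so this integer maps to a unit under $\mathfrak{i}$), the $i$-th component is a unit if and only if $S_k(R_i)$ is a unit in $R_i$. Now I would run through the five nonzero cases of the main theorem to see which ones give a \emph{unit} rather than merely a nonzero element. In case i) of Theorem \ref{teorpp}, $R_i$ is a field and $S_k(R_i)=-1$, which is a unit. In case ii), $R_i\cong\mathbb{Z}/p_i^{s_i}\mathbb{Z}$ with $s_i>1$ and $S_k(R_i)=-p_i^{s_i-1}$, which lies in the maximal ideal and is therefore \emph{not} a unit. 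In case iii), $S_k(R_i)=u$ is a nonzero nilpotent, again not a unit. And whenever $S_k(R_i)=0$ the component fails to be a unit outright. This shows that each $S_k(R_i)$ is a unit if and only if $R_i$ is a field with $(|R_i|-1)\mid k$, which is exactly condition i) (every factor is a field) together with condition iii). Condition ii) then follows because the factors $R_i$ arise from distinct primes $p_i$ in the characteristic factorization, so their characteristics are pairwise distinct; conversely distinct characteristics guarantee the factors are genuinely separate in the decomposition (\ref{des1}).

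For the converse, assuming i)--iii), each $R_i=F_i$ is a field with $(|F_i|-1)\mid k$, so by Proposition \ref{pre} i) every $S_k(F_i)=-1$, and each component $\frac{|R|}{p_i^{s_i}}S_k(F_i)=-\frac{|R|}{p_i^{s_i}}$ is a unit in $F_i$ since it is a nonzero scalar in a field. Therefore $S_k(R)$ is a unit. The main subtlety I expect is bookkeeping around condition ii): I must confirm that "$R$ is a product of fields with pairwise distinct characteristics" is the correct repackaging of "every local factor $R_i$ in the characteristic decomposition is a field." This is exactly the statement that each $R_i$ being a field forces $t_i=s_i$ and $R_i\cong F_i=\mathbb{F}_{p_i^{s_i}}$, so the decomposition (\ref{des1}) already groups the factors by distinct primes, and no two distinct $F_i,F_j$ can share a characteristic. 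Once this identification is made explicit, both implications follow immediately from the unit-criterion in a product ring together with the case analysis above.
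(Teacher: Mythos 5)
Your proposal is correct and follows essentially the same route as the paper's own (very terse) proof: decompose $R$ into its prime-power-characteristic factors, use Lemma \ref{chop} to reduce ``$S_k(R)$ is a unit'' to each $S_k(R_i)$ being a unit in $R_i$, and then sort through the cases of Theorem \ref{teorpp} together with Proposition \ref{pre}. The only difference is that you make explicit the details the paper leaves implicit --- that the cofactor $\frac{|R|}{p_i^{s_i}}$ maps to a unit in $R_i$, that the nonzero values $-p^{s-1}$ and the nilpotent $u$ are not units, and the repackaging of ``every local factor is a field'' as conditions i) and ii) --- which is a faithful expansion rather than a different argument.
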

\begin{proof}
Let $R$ be a finite commutative unital ring. We know that $R\cong R_1\times\cdots\times R_l$ with $R_i$ rings with coprime prime-power characteristic. Due to Lemma \ref{chop} it is clear that $S_k(R)$ is a unit in $R$ if and only if $S_k(R_i)$ is a unit in $R_i$ for every $i$. But by Theorem \ref{teorpp} and Proposition \ref{pre}, this happens if and only if conditions i), ii) and iii) hold.
\end{proof}

From the previous results the question of determining those finite commutative unital rings $R$ such that $S_{|R|}(R)=1$ naturally arises. This question generalizes the problem of determining the integers $n$ for which $\displaystyle \sum_{i=1}^n i^n \equiv 1 \pmod{n}$. This latter problem was solved in \cite[Proposition 1]{GOS} where it was proved that  $1, 2, 6, 42$ and $1806$ are the only possibilities.

\begin{teor} \label{uno}
Let $R$ be a finite commutative unital ring. Then, $S_{|R|}(R)=1$ if and only if the following conditions hold:
\begin{itemize}
\item[i)] $R\cong \mathbb{F}_{p_1^{s_1}} \times\cdots\times \mathbb{F}_{p_l^{s_l}}$ with $p_i \neq p_j$ for every $i\neq j$.
\item[ii)] $p_i^{s_i}-1\mid |R|$ for every $1\leq i\leq s$.
\item[iii)] $|R| \equiv -p^{s_i} \pmod{p^{s_i+1}}$ for every $1\leq i\leq l$.
\end{itemize}
\end{teor}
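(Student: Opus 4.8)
The plan is to reduce the question to a componentwise computation of $S_{|R|}(R)$ and then translate the resulting condition on each direct factor into an arithmetic congruence. First I would observe that $1$ is a unit, so the hypothesis $S_{|R|}(R)=1$ forces $S_{|R|}(R)$ to be a unit. Applying Corollary \ref{unit} with $k=|R|$ immediately yields that $R$ must decompose as $R\cong\mathbb{F}_{p_1^{s_1}}\times\cdots\times\mathbb{F}_{p_l^{s_l}}$ with the $p_i$ pairwise distinct and $p_i^{s_i}-1\mid|R|$; these are precisely conditions i) and ii). Thus both implications can be treated under the assumption that this field decomposition holds, and it remains only to pin down exactly when the unit $S_{|R|}(R)$ equals $1$.

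Next, under conditions i) and ii), I would compute $S_{|R|}(R)$ explicitly. Iterating Lemma \ref{chop} gives
$$S_{|R|}(R)=\left(\frac{|R|}{p_1^{s_1}}\,S_{|R|}(\mathbb{F}_{p_1^{s_1}}),\dots,\frac{|R|}{p_l^{s_l}}\,S_{|R|}(\mathbb{F}_{p_l^{s_l}})\right).$$
Since condition ii) asserts $p_i^{s_i}-1\mid|R|$, Proposition \ref{pre} i) gives $S_{|R|}(\mathbb{F}_{p_i^{s_i}})=-1$ for every $i$, so the $i$-th component equals $-\frac{|R|}{p_i^{s_i}}\cdot 1_{F_i}$, where $1_{F_i}$ denotes the identity of $F_i=\mathbb{F}_{p_i^{s_i}}$. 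The identity element of $R$ corresponds to $(1_{F_1},\dots,1_{F_l})$, so $S_{|R|}(R)=1$ holds if and only if $-\frac{|R|}{p_i^{s_i}}\cdot 1_{F_i}=1_{F_i}$ for every $i$.

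Finally I would interpret this last equality arithmetically. Because $F_i$ has characteristic $p_i$, the element $1_{F_i}$ has additive order $p_i$, so the integer multiple $-\frac{|R|}{p_i^{s_i}}\cdot 1_{F_i}$ depends only on $-\frac{|R|}{p_i^{s_i}}$ modulo $p_i$, and the equality is equivalent to the congruence $\frac{|R|}{p_i^{s_i}}\equiv -1\pmod{p_i}$. Writing $M_i=|R|/p_i^{s_i}$, which is coprime to $p_i$ because the $p_j$ are distinct primes, one has $M_i\equiv-1\pmod{p_i}$ if and only if $p_i^{s_i}(M_i+1)\equiv 0\pmod{p_i^{s_i+1}}$, that is, $|R|\equiv -p_i^{s_i}\pmod{p_i^{s_i+1}}$, which is exactly condition iii). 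Running this equivalence in both directions over all $i$ completes the argument. The only step demanding genuine care is this passage from the equation in $F_i$ to the congruence modulo $p_i$ and then to the modulus $p_i^{s_i+1}$; everything else is a direct application of the results already established.
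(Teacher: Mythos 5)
Your proof is correct and takes essentially the same route as the paper's: deduce conditions i) and ii) from Corollary \ref{unit} because $S_{|R|}(R)=1$ is a unit, then use Lemma \ref{chop} together with Proposition \ref{pre} i) to reduce $S_{|R|}(R)=1$ to the componentwise equations $-\frac{|R|}{p_i^{s_i}}\cdot 1_{F_i}=1_{F_i}$, which are equivalent to condition iii). Your explicit passage from this equation in $F_i$ to the congruence $|R|\equiv -p_i^{s_i}\pmod{p_i^{s_i+1}}$ merely spells out what the paper compresses into ``condition iii) easily follows.''
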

\begin{proof}
If $S_{|R|}(R)=1$, in particular $S_k(R)$ is a unit so Corollary \ref{unit} applies to give conditions i) and ii). Moreover, using Lemma \ref{chop} $S_k(R)=1$ if and only if $\displaystyle \frac{|R|}{p_i^{s_i}}S_k(\mathbb{F}_{p_i^{s_i}})=1$ in $\mathbb{F}_{p_i^{s_i}}$. Since $S_k(\mathbb{F}_{p_i^{s_i}})=-1$ due to condition ii) and Proposition \ref{pre}, condition iii) easily follows.

Conversely, if conditions i), ii) and iii) hold, it is enough to apply Theorem \ref{teorpp} and Proposition \ref{pre} as usual to get the result.
\end{proof}

The following easy corollary relates the previous theorem with \cite[Proposition 1]{GOS}.

\begin{cor}\label{ord}
Let $R$ be a finite commutative unital ring such that $|R|$ is square-free and $S_{|R|}(R)=1$. Then, $R$ is isomoprhic to one of the following rings: the Zero Ring, $\mathbb{Z}/2\mathbb{Z}$, $\mathbb{Z}/6\mathbb{Z}$, $\mathbb{Z}/42\mathbb{Z}$ or $\mathbb{Z}/1806\mathbb{Z}$.
\end{cor}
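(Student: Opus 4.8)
The plan is to reduce the statement to the purely integer-theoretic problem already settled in \cite[Proposition 1]{GOS}, and the whole argument will be a short translation. Since $S_{|R|}(R)=1$, Theorem \ref{uno} applies (forward direction) and tells us at once that $R\cong\mathbb{F}_{p_1^{s_1}}\times\cdots\times\mathbb{F}_{p_l^{s_l}}$ with the primes $p_i$ pairwise distinct. The first step is then to exploit the hypothesis that $|R|=p_1^{s_1}\cdots p_l^{s_l}$ is square-free: this forces $s_i=1$ for every $i$, so each factor is the prime field $\mathbb{F}_{p_i}\cong\mathbb{Z}/p_i\mathbb{Z}$. By the Chinese Remainder Theorem the product of these prime fields is isomorphic to $\mathbb{Z}/n\mathbb{Z}$, where $n:=p_1\cdots p_l$ is square-free. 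Thus, after this first step, the only candidates are $R\cong\mathbb{Z}/n\mathbb{Z}$ with $n$ square-free.

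The second step is to rewrite the condition $S_{|R|}(R)=1$ for these candidates as a congruence. Under the identification $R\cong\mathbb{Z}/n\mathbb{Z}$ we have $|R|=n$ and
$$S_n(\mathbb{Z}/n\mathbb{Z})=\sum_{i=0}^{n-1}i^n\equiv\sum_{i=1}^{n}i^n\pmod{n},$$
since $0^n\equiv n^n\equiv 0$. Hence $S_{|R|}(R)=1$ is exactly the requirement $\sum_{i=1}^n i^n\equiv 1\pmod n$. At this point \cite[Proposition 1]{GOS} applies verbatim and yields $n\in\{1,2,6,42,1806\}$. The third and final step is pure bookkeeping: the values $n=2,6,42,1806$ give $\mathbb{Z}/2\mathbb{Z},\mathbb{Z}/6\mathbb{Z},\mathbb{Z}/42\mathbb{Z},\mathbb{Z}/1806\mathbb{Z}$, while the degenerate value $n=1$ corresponds to the empty product of fields, i.e. the Zero Ring.

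I do not expect a genuine obstacle here, as each step is forced by the previously established machinery. The only points that require a little care are the degenerate case $n=1$, where one must verify that the Zero Ring really does satisfy the hypotheses (it has a single element with $|R|=1$ and $S_1(R)=0=1$, so it legitimately belongs on the list rather than being excluded by convention), and the transition from the abstract product of prime fields to $\mathbb{Z}/n\mathbb{Z}$, which is where the Chinese Remainder Theorem together with square-freeness does the essential work. Reformulating $S_{|R|}(R)=1$ directly as the GOS congruence, rather than trying to match condition iii) of Theorem \ref{uno} term by term, avoids any delicate check and makes the appeal to \cite[Proposition 1]{GOS} immediate.
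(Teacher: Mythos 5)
Your proof is correct and follows essentially the same route as the paper's: apply Theorem \ref{uno} together with square-freeness to conclude $R\cong\mathbb{Z}/n\mathbb{Z}$ with $n$ square-free, translate $S_{|R|}(R)=1$ into the congruence $\sum_{i=1}^n i^n\equiv 1\pmod n$, and invoke \cite[Proposition 1]{GOS}. You merely spell out the intermediate steps (the forcing of $s_i=1$, the Chinese Remainder Theorem, and the degenerate case $n=1$ giving the Zero Ring) that the paper's two-line proof leaves implicit.
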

\begin{proof}
Theorem \ref{uno} above implies that $R\cong \mathbb{Z}/n\mathbb{Z}$ for some square-free integer $n$. Then, $S_k(R)=\displaystyle \sum_{i=1}^n i^n$ and it is enough to apply \cite[Proposition 1]{GOS} to get the result.
\end{proof}

In addition to the aforementioned rings, there is only one more finite commutative unital ring $R$ with order smaller than $10^7$ satisfying $S_{|R|}(R)=1$. Namely,
$$\mathbb{F}_{16}\times \mathbb{F}_{9} \times \mathbb{F}_5.$$

There are not many rings $R$ with $S_{|R|}(R)=-1$ either. The following result, whose proof is identical to that of Theorem \ref{uno} (and hence we omit) characterizes them.

\begin{teor}
Let $R$ be a finite commutative unital ring. Then, $S_{|R|}(R)=-1$ if and only if the following conditions hold:
\begin{itemize}
\item[i)] $R\cong \mathbb{F}_{p_1^{s_1}} \times\cdots\times \mathbb{F}_{p_l^{s_l}}$ with $p_i \neq p_j$ for every $i\neq j$.
\item[ii)] $p_i^{s_i}-1\mid |R|$ for every $1\leq i\leq l$.
\item[iii)] $|R| \equiv p^{s_i} \pmod{p^{s_i+1}}$ for every $1\leq i\leq l$.
\end{itemize}
\end{teor}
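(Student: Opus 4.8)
The plan is to follow verbatim the skeleton of the proof of Theorem \ref{uno}, exploiting that $-1$, exactly like $1$, is a unit in every finite commutative unital ring. First I would observe that if $S_{|R|}(R)=-1$, then in particular $S_{|R|}(R)$ is a unit, so Corollary \ref{unit} (applied with $k=|R|$) immediately yields conditions i) and ii): the ring $R$ is a product of finite fields $\mathbb{F}_{p_i^{s_i}}$ with pairwise distinct characteristics $p_i$, and $p_i^{s_i}-1\mid|R|$ for every $i$.

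Next I would pin down the value of the power sum on each factor. Writing $R\cong\mathbb{F}_{p_1^{s_1}}\times\cdots\times\mathbb{F}_{p_l^{s_l}}$ and iterating Lemma \ref{chop}, the power sum decomposes componentwise as
$$S_{|R|}(R)=\left(\frac{|R|}{p_1^{s_1}}S_{|R|}(\mathbb{F}_{p_1^{s_1}}),\dots,\frac{|R|}{p_l^{s_l}}S_{|R|}(\mathbb{F}_{p_l^{s_l}})\right),$$
so $S_{|R|}(R)=-1$ if and only if $\frac{|R|}{p_i^{s_i}}S_{|R|}(\mathbb{F}_{p_i^{s_i}})=-1$ in $\mathbb{F}_{p_i^{s_i}}$ for every $i$. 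By condition ii) we have $p_i^{s_i}-1\mid|R|$, whence Proposition \ref{pre} i) gives $S_{|R|}(\mathbb{F}_{p_i^{s_i}})=-1$. Substituting, the $i$-th condition becomes $\frac{|R|}{p_i^{s_i}}\equiv 1\pmod{p_i}$, where the integer scalar $\frac{|R|}{p_i^{s_i}}$ is read through the canonical image of $\mathbb{Z}$ in $\mathbb{F}_{p_i^{s_i}}$, which factors through reduction modulo $p_i$. Since $|R|=p_i^{s_i}\cdot\frac{|R|}{p_i^{s_i}}$ with $\frac{|R|}{p_i^{s_i}}$ coprime to $p_i$, this is equivalent to $|R|\equiv p_i^{s_i}\pmod{p_i^{s_i+1}}$, which is exactly condition iii).

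For the converse I would merely reverse this chain: assuming i), ii) and iii), Theorem \ref{teorpp} together with Proposition \ref{pre} i) guarantees $S_{|R|}(\mathbb{F}_{p_i^{s_i}})=-1$, while condition iii) translates back into $\frac{|R|}{p_i^{s_i}}\equiv 1\pmod{p_i}$; hence each component $\frac{|R|}{p_i^{s_i}}S_{|R|}(\mathbb{F}_{p_i^{s_i}})$ equals $-1$, and Lemma \ref{chop} closes the argument.

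The only delicate point, and the one I would watch most carefully, is the sign bookkeeping in the final congruence. The sole difference from Theorem \ref{uno} is that demanding $\frac{|R|}{p_i^{s_i}}S_{|R|}(\mathbb{F}_{p_i^{s_i}})=-1$ (rather than $=1$) converts the requirement $\frac{|R|}{p_i^{s_i}}\equiv -1$ into $\frac{|R|}{p_i^{s_i}}\equiv +1\pmod{p_i}$, and therefore $|R|\equiv p_i^{s_i}$ (rather than $-p_i^{s_i}$) modulo $p_i^{s_i+1}$. No genuinely new obstacle arises; apart from this single sign flip the reasoning is identical to that of Theorem \ref{uno}, which is precisely why the authors omit it.
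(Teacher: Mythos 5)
Your proposal is correct and is precisely the argument the paper intends: the authors omit the proof, stating it is identical to that of Theorem \ref{uno}, and your write-up carries out that identical scheme (Corollary \ref{unit} for conditions i) and ii), Lemma \ref{chop} componentwise, Proposition \ref{pre} i) on each factor) with the single sign flip correctly tracked through to $|R|\equiv p_i^{s_i}\pmod{p_i^{s_i+1}}$.
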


We have only been able to find 5 rings with this property: the Zero Ring, $\mathbb{F}_2$, $\mathbb{F}_4 \times \mathbb{F}_3$, $\mathbb{F}_{16} \times\mathbb{F}_{81} \times \mathbb{F}_{25}$, $\mathbb{F}_{16} \times\mathbb{F}_{81}\times\mathbb{F}_{5}\times\mathbb{F}_{11}$. The orders of the non-zero cases are: $2$, $12$, $32400$ and $71280$.

\section{Power sums over $\mathbb{Z}/n\mathbb{Z}[x]/(f(x))$}

As an application of the previous results, we are interested in computing the power sum $S_k(\mathbb{Z}/n\mathbb{Z}[x]/(f(x)))$, where $f(x)$ is a monic polynomial. When $\deg f=1$, the result is straightforward because $\mathbb{Z}/n\mathbb{Z}[x]/(f(x))\cong \mathbb{Z}/n\mathbb{Z}$ and Proposition \ref{pre} ii) applies.

In order to study the case when $\deg f>1$ we will first focus on the quadratic case.

\subsection{Power sums over $\mathbb{Z}/n\mathbb{Z}[x]/(x^2+bx+c)$}
\

Before we proceed, let us introduce some notation. Given any positive integer $n$ and integers $b,c$ we define
$$R_n^{b,c}:=\mathbb{Z}/n\mathbb{Z}[x]/(x^2+bx+c).$$
As usual, to compute the value of $S_k(R_n^{b,c})$ we will first focus on the case when $n$ is a prime power.

\begin{prop}\label{cuadpp}
Let $k\geq 1$ be and integer.
\begin{itemize}
\item[i)]
If $s$ is a positive integer,\\
$\displaystyle
S_k(R_{2^s}^{b,c}) =
\begin{cases}   1    ,
  & \textrm{if $s=1$, $b$ and $c$ are odd and $3 \mid k $;}\\
1+x   ,
  & \textrm{if $s=1$, $b$ is even, $c$ is odd and $k>1$ is odd;}\\
      x   ,
  & \textrm{if $s=1$, $b$ and $c$ are even and $k>1$ is odd; }\\
  0
      & \textrm{otherwise}.
\end{cases}
$
\item[ii)]
If $p$ is an odd prime and $s$ is a positive integer,\\
$\displaystyle
S_k(R_{p^s}^{b,c})=
\begin{cases}   -1   ,   & \textrm{if $s=1$, $p^2-1 \mid k$  and $b^2-4c$ is not a square mod. $p$;} \\
  0
      & \textrm{otherwise}.
\end{cases}
$
\end{itemize}
\end{prop}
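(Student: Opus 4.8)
The plan is to exploit the fact that $R_{p^s}^{b,c}$ is a ring of prime-power characteristic and to reduce the whole computation to the classification already obtained in Theorem \ref{teorpp}. The first observation is structural: as a $\mathbb{Z}/p^s\mathbb{Z}$-module, $R_{p^s}^{b,c}$ is free of rank $2$ with basis $\{1,x\}$, so $\textrm{char}(R_{p^s}^{b,c})=p^s$, one has $|R_{p^s}^{b,c}|=p^{2s}$, and in particular $R_{p^s}^{b,c}$ is \emph{not} a cyclic $\mathbb{Z}/p^s\mathbb{Z}$-module.

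First I would dispose of the case $s>1$. Since $R_{p^s}^{b,c}$ has characteristic $p^s$ with $s>1$ and is non-cyclic, Proposition \ref{nocic} applies directly and gives $S_k(R_{p^s}^{b,c})=0$ for every $k$. This accounts for the ``otherwise'' ($=0$) branch whenever $s\geq 2$, in both parts i) and ii), and leaves only the case $s=1$ to analyse.

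For $s=1$ we are working over the field $\mathbb{F}_p$, and the structure of $R_p^{b,c}=\mathbb{F}_p[x]/(x^2+bx+c)$ is governed by the factorization of $x^2+bx+c$ over $\mathbb{F}_p$. When $p$ is odd I would split according to the discriminant $\Delta=b^2-4c$ modulo $p$: if $\Delta$ is a non-square then the polynomial is irreducible and $R_p^{b,c}\cong\mathbb{F}_{p^2}$, so Theorem \ref{teorpp} i) together with Proposition \ref{pre} i) gives $S_k=-1$ exactly when $p^2-1\mid k$; if $\Delta$ is a nonzero square then the roots are distinct and $R_p^{b,c}\cong\mathbb{F}_p\times\mathbb{F}_p$ by the Chinese Remainder Theorem, while if $\Delta=0$ then $R_p^{b,c}\cong\mathbb{F}_p[y]/(y^2)$ with $p$ odd. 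In these last two cases the ring is neither a field, nor isomorphic to $\mathbb{Z}/p^s\mathbb{Z}$, nor to $(\mathbb{Z}/2\mathbb{Z})[x]/(x^2)$, so Theorem \ref{teorpp} forces $S_k=0$. This establishes part ii).

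The delicate part is $p=2$, where the discriminant criterion breaks down and one must factor $x^2+bx+c$ over $\mathbb{F}_2$ directly according to the parities of $b$ and $c$. I would treat the four cases separately: $b,c$ odd gives $x^2+x+1$, which is irreducible, so $R_2^{b,c}\cong\mathbb{F}_4$ and $S_k=-1=1$ precisely when $3\mid k$; $b$ odd and $c$ even gives $x(x+1)$, so $R_2^{b,c}\cong\mathbb{F}_2\times\mathbb{F}_2$ and $S_k=0$ by Theorem \ref{teorpp}; $b,c$ even gives $x^2$, so $R_2^{b,c}\cong(\mathbb{Z}/2\mathbb{Z})[x]/(x^2)$ with nilpotent $u=x$; and $b$ even, $c$ odd gives $(x+1)^2$, so $R_2^{b,c}\cong(\mathbb{Z}/2\mathbb{Z})[x]/(x^2)$ with nilpotent $u=1+x$. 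In the last two cases Lemma \ref{pol2} yields $S_k=u$ for $k>1$ odd and $0$ otherwise. The main obstacles are thus purely characteristic-$2$ bookkeeping: correctly identifying the nilpotent generator $u$ (which is $1+x$ rather than $x$ when $c$ is odd) and recording that $-1=1$ in a field of characteristic $2$, so that the ``$-1$'' of Proposition \ref{pre} i) appears as the ``$1$'' stated in part i).
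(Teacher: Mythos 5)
Your proposal is correct and follows essentially the same route as the paper: Proposition \ref{nocic} disposes of $s>1$ (your explicit remark that $R_{p^s}^{b,c}$ is free of rank $2$, hence non-cyclic, is a welcome detail the paper leaves implicit), followed by the same case analysis on the factorization of $x^2+bx+c$ for $s=1$, with $\mathbb{F}_4$ and the two presentations of $(\mathbb{Z}/2\mathbb{Z})[x]/(x^2)$ handled via Proposition \ref{pre} i) and Lemma \ref{pol2} exactly as in the paper. The only cosmetic difference is that where the paper kills the reducible cases directly with Lemmas \ref{ndi1} and \ref{ndi2}, you invoke the classification of Theorem \ref{teorpp}; this is equivalent, since that theorem is itself proved from those lemmas and precedes the proposition, so no circularity arises.
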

\begin{proof}
\begin{itemize}
\item[i)] First of all, if $s>1$ then $\textrm{char}(R_{2^s}^{b,c})\geq 4$ and we can apply Proposition \ref{nocic} to obtain that $S_k(R_{2^s}^{b,c})=0$ for every $k$ in this case.

If $s=1$ and both $b$ and $c$ are even, then $R_{2^s}^{b,c}=\mathbb{Z}/2\mathbb{Z}/(x^2)$ and by Lemma \ref{pol2} it follows that $S_k(R_{2^s}^{b,c})=x$ if $k>1$ is odd and $S_k(R_{2^s}^{b,c})=0$ otherwise.

If $s=1$, $b$ is even and $c$ is odd, then $R_{2^s}^{b,c}=\mathbb{Z}/2\mathbb{Z}/(x^2+1)$ and by Lemma \ref{pol2} it follows that $S_k(R_{2^s}^{b,c})=1+x$ if $k>1$ is odd (note that $1+x$ is the only non-zero nilpotent element) and $S_k(R_{2^s}^{b,c})=0$ otherwise.

If $s=1$, $b$ is odd and $c$ is even, then $R_{2^s}^{b,c}=\mathbb{Z}/2\mathbb{Z}/(x^2+x)$. Since $0=x^2+x=x(x+1)$, we can apply Lemma \ref{ndi2} to obtain that $S_k(R_{2^s}^{b,c})=0$ for every $k$ in this case.

Finally, if both $b$ and $c$ are odd, then $R_{2^s}^{b,c}=\mathbb{Z}/2\mathbb{Z}/(x^2+x+1)\cong\mathbb{F}_4$ because $x^2+x+1$ is irreducible. Hence, we apply Proposition \ref{pre} i) to obtain that $S_k(R_{2^s}^{b,c})=-1=1$ if $3\mid k$ and $S_k(R_{2^s}^{b,c})=0$ otherwise.
\item[ii)] First of all, if $s>1$ then $\textrm{char}(R_{p^s}^{b,c})\geq p^2$ and we can apply Proposition \ref{nocic} to obtain that $S_k(R_{p^s}^{b,c})=0$ for every $k$ in this case.

If $s=1$, observe that $x^2+bx+c$ is reducible if and only if $b^2-4c$ is a quadratic residue modulo $p$. Now, if $x^2+bx+c$ is reducible we can apply Lemma \ref{ndi1} or Lemma \ref{ndi2} to obtain that $S_k(R_{p^s}^{b,c})=0$ for every $k$. Finally, if $x^2+bx+c$ is irreducible then $R_{p^s}^{b,c}\cong\mathbb{F}_{p^2}$ and Proposition \ref{pre} i) ends the proof.
\end{itemize}
\end{proof}

With this proposition, we can prove the general result.

\begin{teor}\label{tquad}
Let $n$ be any positive integer. Given integers $k\geq 1$, $b$ and $c$ we define the following set:
$$\mathcal{P}^{b,c}(k,n):=\{\textrm{prime}\ p: p \mid \mid n,\ p^2-1 \mid k,\ b^2-4c\ \textrm{is not a quadratic residue modulo}\ p\}.$$
Then:
$$
S_k(R_n^{b,c})=
\begin{cases}
  \frac{n}{2},
  & \textrm{if   $b$ and $c$ are odd, $3 \mid k $ and $2\mid\mid n$};  \\
 \frac{n}{2}(1+x),
  & \textrm{if $b$ is even, $c$ is odd, $k>1$ is odd, and $2\mid\mid n$};\\
   \frac{n}{2}  x,
  & \textrm{if $b$ and $c$ are even, $k>1$ is odd and $2\mid\mid n$};\\
  \displaystyle{-\sum_{p\in\mathcal{P}^{b,c}(k,n )} \frac{n^2}{p^2}
    }, & \textrm{otherwise}.
\end{cases}
$$
\end{teor}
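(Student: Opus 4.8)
The plan is to reduce the general modulus $n$ to the prime-power case already settled in Proposition \ref{cuadpp}, and then to reassemble the local data by means of Lemma \ref{chop}. Writing $n=p_1^{s_1}\cdots p_l^{s_l}$ and reducing the coefficients $b,c$ modulo each $p_i^{s_i}$, the isomorphism $\mathbb{Z}/n\mathbb{Z}\cong\prod_i\mathbb{Z}/p_i^{s_i}\mathbb{Z}$ induces a ring isomorphism
$$R_n^{b,c}=(\mathbb{Z}/n\mathbb{Z})[x]/(x^2+bx+c)\cong\prod_{i=1}^l(\mathbb{Z}/p_i^{s_i}\mathbb{Z})[x]/(x^2+bx+c)=\prod_{i=1}^l R_{p_i^{s_i}}^{b,c}.$$
Since $x^2+bx+c$ is monic of degree $2$, each $R_{p_i^{s_i}}^{b,c}$ is free of rank $2$ over $\mathbb{Z}/p_i^{s_i}\mathbb{Z}$, so $|R_{p_i^{s_i}}^{b,c}|=p_i^{2s_i}$ and hence $|R_n^{b,c}|=n^2$.

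Next I would apply Lemma \ref{chop} inductively across this product to conclude that the $i$-th component of $S_k(R_n^{b,c})$ equals $\frac{n^2}{p_i^{2s_i}}S_k(R_{p_i^{s_i}}^{b,c})$, reducing everything to the local values provided by Proposition \ref{cuadpp}. Two simplifications organize the analysis. First, whenever $s_i\geq 2$ the local sum vanishes (Proposition \ref{cuadpp}, equivalently Proposition \ref{nocic}), so only primes with $p_i\mid\mid n$ can contribute. Second, for an odd prime $p\mid\mid n$ the factor $R_p^{b,c}$ contributes exactly $-1$ precisely when $x^2+bx+c$ is irreducible modulo $p$ and $p^2-1\mid k$, that is, precisely when $p\in\mathcal{P}^{b,c}(k,n)$; for $p=2$ the contribution is dictated by the parities of $b,c,k$ as listed in Proposition \ref{cuadpp} i). I would also record the two elementary parity facts that control the final case split: the criterion defining $\mathcal{P}^{b,c}(k,n)$ never admits $p=2$, since every residue is a square modulo $2$; and for any odd prime $p$ one has $8\mid p^2-1$, so when $k$ is odd the set $\mathcal{P}^{b,c}(k,n)$ contains no odd prime.

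It then remains to translate the tuple of local values back into a single element of $R_n^{b,c}$. Each odd prime of $\mathcal{P}^{b,c}(k,n)$ contributes $-1$ in the scalar subring, and identifying the associated idempotent tuple with an integer shows that their combined effect is the image in $R_n^{b,c}$ of the integer $-\sum_{p\in\mathcal{P}^{b,c}(k,n)}n^2/p^2$; a prime-by-prime congruence check (each summand $n^2/p^2$ is divisible by $q^{2s_q}$ for every prime $q\neq p$ dividing $n$) confirms that this single integer reproduces exactly the required local components. When $2\mid\mid n$, the prime $2$ instead contributes one of the genuine ring elements $1$, $1+x$ or $x$ of the corresponding $\mathbb{F}_4$- or $(\mathbb{Z}/2\mathbb{Z})[x]/(x^2)$-factor; lifting the idempotent of the factor $\mathbb{Z}/2\mathbb{Z}\subset\mathbb{Z}/n\mathbb{Z}$, which is the integer $n/2$, scales these to $\frac n2$, $\frac n2(1+x)$ and $\frac n2 x$ respectively. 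Adding the two-adic element and the scalar sum gives $S_k(R_n^{b,c})$; in cases ii) and iii) the exponent $k$ is odd, so by the parity remark the scalar sum is empty and only the displayed two-adic element survives, while in the remaining situations the scalar sum is the whole answer.

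The routine parts are the CRT isomorphism and the bookkeeping with the factors $n^2/p_i^{2s_i}$. The step I expect to be most delicate is the reassembly: one must pass from the componentwise description afforded by Lemma \ref{chop} to an honest element of $R_n^{b,c}$, keeping careful track of each scalar coefficient modulo the relevant local characteristic and correctly identifying the idempotent of the factor $\mathbb{Z}/2\mathbb{Z}$ with the integer $n/2$, so that the non-scalar elements $x$ and $1+x$ are scaled as claimed. Verifying that the single integer $-\sum_p n^2/p^2$ simultaneously realizes all the odd local components, and that it is orthogonal to the two-adic contribution, is the heart of this step.
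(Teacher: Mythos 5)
Your route is exactly the paper's own: the published proof consists of the single remark that $R_{n_1n_2}^{b,c}\cong R_{n_1}^{b,c}\times R_{n_2}^{b,c}$ for coprime $n_1,n_2$, followed by an appeal to Proposition \ref{cuadpp}. Your CRT decomposition, the use of Lemma \ref{chop}, the vanishing of the local sums when $s_i\geq 2$, the identification of the lift of the idempotent of the $2$-factor with the integer $n/2$ (which is indeed idempotent mod $n$ when $2\mid\mid n$), and the verification that the single integer $-\sum_p n^2/p^2$ simultaneously realizes all the odd local components are precisely the details the paper leaves implicit, and those details are all correct.

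The gap is in your last step, and it sits exactly at case i). Your parity remark (for odd $p$ one has $8\mid p^2-1$, so odd $k$ forces $\mathcal{P}^{b,c}(k,n)=\emptyset$) neutralizes the interaction between the two-adic element and the scalar sum only in cases ii) and iii), where $k$ is odd by hypothesis. Case i) assumes only $3\mid k$, so $k$ may be even, and then the $\mathbb{F}_4$-contribution and a nonempty $\mathcal{P}^{b,c}(k,n)$ can coexist; your closing claim that ``in the remaining situations the scalar sum is the whole answer'' is then false --- and so, in fact, is the displayed value $\frac{n}{2}$ in the statement itself. Concretely, take $n=6$, $b=3$, $c=1$, $k=24$: the factor at $2$ is $\mathbb{F}_4$ with $3\mid 24$, while $3\in\mathcal{P}^{3,1}(24,6)$ because $3\mid\mid 6$, $3^2-1=8\mid 24$, and $b^2-4c=5\equiv 2$ is not a square modulo $3$. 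Lemma \ref{chop} gives the tuple $\bigl(9\cdot(-1),\,4\cdot(-1)\bigr)=(1,-1)\in\mathbb{F}_4\times\mathbb{F}_9$, whose preimage is $S_{24}(R_6^{3,1})=5=-1=\frac{n}{2}-\frac{n^2}{9}$, not $\frac{n}{2}=3$. So your own reassembly mechanism, carried out honestly, yields $\frac{n}{2}-\sum_{p\in\mathcal{P}^{b,c}(k,n)}\frac{n^2}{p^2}$ in case i) and thereby disproves the statement as printed rather than proving it; the fix is to add the scalar sum to the first line (or to restrict case i) to the situation $\mathcal{P}^{b,c}(k,n)=\emptyset$, e.g.\ $k$ odd). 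The paper's one-sentence proof overlooks the same interaction; the other three cases, and case i) when $\mathcal{P}^{b,c}(k,n)$ is empty, are fine in both your argument and the paper's.
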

\begin{proof}
Observe that for coprime $n_1$ and $n_2$ we have that $R_{n_1n_2}^{b,c}\cong R_{n_1}^{b,c}\times R_{n_2}^{b,c}$. Thus, it suffices to apply Proposition \ref{cuadpp} above.
\end{proof}

As an interesting consequence of this result, we can compute the power sum over the rings $\mathbb{Z}/n\mathbb{Z}[\sqrt{D}]$ for a square-free integer $D$.

\begin{cor}
Let $k,n\geq 1$ be integers and let $D$ be an square-free integer. Consider the set
$$\mathcal{P}(k,n):=\{ \textrm{prime $p$} : p \mid \mid n,\ p^2-1\mid k,\ D\ \textrm{is not a quadratic residue modulo}\ p\}.$$
Then,
$$
S_k(\mathbb{Z}_n[\sqrt{D}]) = \begin{cases}  \frac{n}{2}(1+\sqrt{D}) ,
  & \textrm{if $k>1$ is odd and $2\mid\mid n$};\\
  \displaystyle{-\sum_{p\in\mathcal{P}(k,n)} \frac{n^2}{p^2}
    }, & \textrm{otherwise}.
\end{cases}
$$
\end{cor}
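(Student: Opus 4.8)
The plan is to observe that $\mathbb{Z}_n[\sqrt{D}]$ is nothing other than the ring $R_n^{0,-D}$ already analysed in Theorem \ref{tquad}. Indeed, since $D$ is square-free, $\sqrt{D}$ is a root of the monic polynomial $x^2-D\in\mathbb{Z}[x]$, so reducing coefficients modulo $n$ gives a ring isomorphism $\mathbb{Z}_n[\sqrt{D}]\cong\mathbb{Z}/n\mathbb{Z}[x]/(x^2-D)=R_n^{0,-D}$ under which $\sqrt{D}$ corresponds to the class of $x$. Thus the whole statement should follow by specialising Theorem \ref{tquad} to $b=0$, $c=-D$ and translating $x$ back to $\sqrt{D}$; no fresh power-sum computation is needed.

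The first substantive step is to reconcile the two index sets. With $b=0$ and $c=-D$ the discriminant appearing in Theorem \ref{tquad} is $b^2-4c=4D$. For an odd prime $p$ the element $4$ is a nonzero square modulo $p$, hence $4D$ is a quadratic residue modulo $p$ if and only if $D$ is. Therefore $\mathcal{P}^{0,-D}(k,n)=\mathcal{P}(k,n)$, and the ``otherwise'' branch of Theorem \ref{tquad} delivers exactly the sum $-\sum_{p\in\mathcal{P}(k,n)}n^2/p^2$ claimed in the corollary. As a sanity check, taking $D=-1$ recovers the Gaussian case of Proposition \ref{pre} iii), since $-1$ is a nonresidue modulo $p$ precisely when $p\equiv 3\pmod 4$.

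It then remains to examine the special branches of Theorem \ref{tquad}, which occur only when $k>1$ is odd and $2\mid\mid n$. Because $b=0$ is even, the first branch of that theorem (which demands $b$ odd) cannot arise, so the outcome is dictated solely by the parity of $c=-D$, equivalently of $D$; this is where I expect the only real subtlety to lie, since it hinges on how $x^2-D$ factors modulo $2$. When $D$ is odd one has $x^2-D\equiv(x+1)^2\pmod 2$, so the char-$2$ component of $R_n^{0,-D}$ is $\mathbb{F}_2[x]/((x+1)^2)$, whose unique non-zero nilpotent is $1+x$, and Lemma \ref{pol2} (through Proposition \ref{cuadpp}) produces the contribution $\frac{n}{2}(1+\sqrt{D})$. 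When $D$ is even instead $x^2-D\equiv x^2\pmod 2$, the nilpotent is $x$ itself and the contribution is $\frac{n}{2}\sqrt{D}$; so the clean formula $\frac{n}{2}(1+\sqrt{D})$ really corresponds to the odd-$D$ case (which includes the Gaussian example $D=-1$), and I would either assume $D$ odd or record the even case separately. The final assembly is then routine: for odd $k$ none of the odd-prime factors contributes, because $p^2-1$ is even and cannot divide $k$, so only the char-$2$ factor survives and Lemma \ref{chop} scales its value by $n/2$ to yield the displayed answer.
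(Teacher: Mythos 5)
Your proposal follows exactly the paper's route: the paper's entire proof is the one-liner ``take $f(x)=x^2-D$ and apply Theorem \ref{tquad}'', i.e., identify $\mathbb{Z}_n[\sqrt{D}]$ with $R_n^{0,-D}$, which is precisely your plan. However, you executed the specialisation more carefully than the paper does, and in doing so you exposed a genuine defect in the stated corollary. Your reconciliation of the index sets is the step the paper leaves implicit, and it is correct: for odd $p$ the discriminant $b^2-4c=4D$ is a quadratic residue if and only if $D$ is, and $p=2$ lies in neither set since every residue class is a square modulo $2$, so $\mathcal{P}^{0,-D}(k,n)=\mathcal{P}(k,n)$. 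More importantly, your parity analysis of $c=-D$ is right: when $D$ is even (hence $D\equiv 2\pmod{4}$ by square-freeness), $x^2-D\equiv x^2\pmod{2}$, so the third branch of Theorem \ref{tquad} applies and gives $\frac{n}{2}\sqrt{D}$, not the $\frac{n}{2}(1+\sqrt{D})$ that the corollary asserts for all square-free $D$. A direct check confirms this: in $\mathbb{Z}_2[\sqrt{2}]\cong\mathbb{F}_2[x]/(x^2)$ one computes $S_3=x=\sqrt{2}$, whereas the displayed formula would give $1+\sqrt{2}$. So the corollary as printed is correct only for odd $D$ (which covers the Gaussian case $D=-1$ that motivates it), and your proposed repair---either assuming $D$ odd or recording the even case separately with value $\frac{n}{2}\sqrt{D}$---is exactly what the statement needs. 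There is no gap on your side; the gap is in the paper's statement, which its own one-line proof via Theorem \ref{tquad} does not actually support when $D$ is even.
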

\begin{proof}
Just take $f(x)=x^2-D$ and apply Theorem \ref{tquad}.
\end{proof}

\begin{rem}
If we consider the case $D=-1$, the previous corollary immediately gives Proposition \ref{pre} iii), which was proved in \cite{for} using different, more direct, techniques.
\end{rem}

\subsection{Power sums over $\mathbb{Z}/n\mathbb{Z}[x]/(f(x))$ with $\deg f>2$}
\

Now, we focus on the case when the degree of the considered polynomial is greater than 2. The involved ideas are quite similar to those previously used. We introduce the following notation:
$$R_n^f:=\mathbb{Z}/n\mathbb{Z}[x]/(f(x)).$$

\begin{teor}
Let $f(x)$ be monic polynomial with integer coefficients such that $\deg f>2$ and let $k,n\geq 1$ be integers. Consider the set
$$\mathcal{P}^{f}(k,n):=\{\textrm{prime  $p$} : p \mid \mid n,\ p^{\deg f}-1 \mid k,\ f(x)\ \textrm{is irreducible modulo}\ n\}.   $$
Then,
$$S_k(R_n^f)\equiv  -\sum_{p\in\mathcal{P}^{f}(k,n)} \frac{n^{\deg f}}{p^{\deg f)}}.$$
\end{teor}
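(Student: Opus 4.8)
The plan is to reduce the degree-$>2$ case to the prime-power characteristic machinery already developed, exactly mirroring the quadratic case. First I would observe that by the Chinese Remainder Theorem, for coprime $n_1$ and $n_2$ we have $R_{n_1n_2}^f\cong R_{n_1}^f\times R_{n_2}^f$, so that Lemma \ref{chop} reduces everything to the case $n=p^s$ a prime power. Thus the heart of the argument is an analogue of Proposition \ref{cuadpp} for $R_{p^s}^f$ when $\deg f>2$.

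For the prime-power case I would argue as follows. If $s>1$, then $\textrm{char}(R_{p^s}^f)\geq p^2$, and since $\deg f>1$ the ring is not a cyclic $\mathbb{Z}/p^s\mathbb{Z}$-module; hence Proposition \ref{nocic} gives $S_k(R_{p^s}^f)=0$. So assume $s=1$, i.e.\ we work over the field $\mathbb{F}_p$ with the algebra $\mathbb{F}_p[x]/(f(x))$ where $\deg f>2$. If $f$ is irreducible modulo $p$, then $R_p^f\cong\mathbb{F}_{p^{\deg f}}$ and Proposition \ref{pre} i) yields $S_k=-1$ precisely when $p^{\deg f}-1\mid k$, contributing the term $-n^{\deg f}/p^{\deg f}$ after multiplying by $|R|/|R_p^f|$ as in Lemma \ref{chop}. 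If $f$ is reducible modulo $p$, I would show $S_k(R_p^f)=0$. Here the key point is that $\deg f>2$ forces $R_p^f$ to contain either a nonzero element $x_0$ with $x_0^2=0$ or a free family $\{x_0,y_0\}$ with $x_0y_0=0$: indeed, a nontrivial factorization $f=gh$ in $\mathbb{F}_p[x]$ produces zero-divisors, and the Chinese Remainder Theorem decomposition of $\mathbb{F}_p[x]/(f)$ into a product of local rings $\mathbb{F}_p[x]/(q_i^{e_i})$ always supplies such elements once the total dimension exceeds that of a single quadratic field extension. One then invokes Lemma \ref{ndi1} (when a nonzero square-zero element exists and $p>2$) or Lemma \ref{ndi2} (when an orthogonal free pair exists) to conclude $S_k=0$.

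The main obstacle I anticipate is the \textbf{reducible case for $p=2$}, since Lemma \ref{ndi1} requires $q>2$ and Lemma \ref{pol2} shows that the two-dimensional ring $(\mathbb{Z}/2\mathbb{Z})[x]/(x^2)$ has nonzero power sums. The task is therefore to verify that no reducible $f$ of degree $>2$ over $\mathbb{F}_2$ can yield a ring whose power sum escapes via the $(\mathbb{Z}/2\mathbb{Z})[x]/(x^2)$ exception of Theorem \ref{teorpp}. Concretely, when $\deg f>2$ the algebra $\mathbb{F}_2[x]/(f)$ has dimension $>2$, so after splitting off any local factor the complement is nontrivial; this extra dimension guarantees a free family $\{x_0,y_0\}$ with $x_0y_0=0$ (either from two distinct maximal ideals or from a nilpotent together with an independent idempotent direction), bringing Lemma \ref{ndi2} back into play. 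Establishing that this free orthogonal pair always exists for $\dim>2$, uniformly over all $p$ including $p=2$, is the crux; once it is in hand the conclusion $S_k(R_p^f)=0$ in every reducible case follows immediately, and assembling the prime-power results through Lemma \ref{chop} gives the stated formula.
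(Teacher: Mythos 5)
Your global skeleton matches the paper exactly: CRT gives $R_{n}^{f}\cong R_{p_1^{s_1}}^{f}\times\cdots\times R_{p_l^{s_l}}^{f}$, Lemma \ref{chop} assembles the pieces, Proposition \ref{nocic} kills $s>1$, and Proposition \ref{pre} i) handles the irreducible case. The one genuine gap is the step you yourself flag as the crux and then do not close: for $p=2$ and $f$ reducible modulo $2$, your appeal to Lemmas \ref{ndi1} and \ref{ndi2} is incomplete precisely when $f\equiv q^{e}\pmod{2}$ is a power of a \emph{single} irreducible $q$ with $e\geq 2$. Then $\mathbb{F}_2[x]/(q^{e})$ is local, so there are no orthogonal idempotents to draw on, there is no ``independent idempotent direction'' as your sketch suggests, and Lemma \ref{ndi1} is unavailable since $q=2$ is excluded from its hypothesis. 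The pair does exist, but it must be exhibited: if $\deg q=1$ (so $e=\deg f\geq 3$, and after a translation $q=x$) take $x_0=x^{e-2}$, $y_0=x^{e-1}$, whence $x_0y_0=x^{2e-3}=0$ because $2e-3\geq e$; if $\deg q\geq 2$ set $\pi=q(x)$ and take $x_0=\pi^{e-1}$, $y_0=x\pi^{e-1}$, whence $x_0y_0=x\pi^{2e-2}=0$, with independence because $x\pi^{e-1}=c\pi^{e-1}$ would force $q\mid(x-c)$, impossible for $\deg q\geq 2$. With these two constructions supplied, your argument is complete and correct for all $p$.

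That said, the entire case analysis you labor over is unnecessary, and this is where the paper's (very terse) proof takes a different and shorter route: it implicitly invokes the classification of Theorem \ref{teorpp}, which already says that a finite commutative unital ring of prime-power characteristic has $S_k\neq 0$ only if it is a field, is isomorphic to $\mathbb{Z}/p^{s}\mathbb{Z}$, or is isomorphic to $(\mathbb{Z}/2\mathbb{Z})[x]/(x^2)$. Since $R_{p^{s}}^{f}$ is a free $\mathbb{Z}/p^{s}\mathbb{Z}$-module of rank $\deg f>2$, the last two possibilities are excluded by a dimension count, and reducibility of $f$ modulo $p$ excludes the field case; hence $S_k(R_{p^{s}}^{f})=0$ in every non-field case, with no need to re-run Lemmas \ref{ndi1} and \ref{ndi2} by hand, and in particular the $(\mathbb{Z}/2\mathbb{Z})[x]/(x^2)$ exception you worry about is ruled out at once because that ring has $\mathbb{F}_2$-dimension $2<\deg f$. (A minor point in the statement itself, worth noting either way: ``irreducible modulo $n$'' in the definition of $\mathcal{P}^{f}(k,n)$ should read ``irreducible modulo $p$.'')
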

\begin{proof}
Let $n=p_1^{s_1} ... p_l^{s_l}$. Then, as usual
$$R_n^f\cong R_{p_1^{s_1}}^f\times\cdots\times R_{p_l^{s_l}}^f$$
an we can apply Lemma \ref{chop}.

First of all, note that $S_k(R_{p_i^{s_i}}^f)=-1$  if $p_i \in \mathcal{P}(n,k)$ and $S_k(R_{p_i^{s_i}}^f)=0$ otherwise. Since $|R_n^f|=n^{\deg f}$, the result follows.
\end{proof}

\section{Future perspectives}

\subsection{Power sums over   non-commutative rings}
A natural sequel for this work would be to focus on the computation of $S_k(R)$ for more general rings. In particular, the non-commutative case seems interesting and we can pose the following question.

\begin{q}\label{Q1}
Is there any finite non-commutative ring $R$ with odd characteristic such that $S_k(R)\neq 0$ for some $k$?
\end{q}

This question is nontrivial and it is enough to restric it to prime characteristic rings. Moreover, we have a lower bound for the cardinality of a candidate to answer Question \ref{Q1} in the affirmative.

\begin{prop}
Let $R$ be a finite non-commutative unital ring with odd characteristic. If $S_k(R)\neq0$ for some $k\geq 1$, then $|R|\geq 81$.
\end{prop}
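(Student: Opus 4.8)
The plan is to reduce the statement to a single borderline ring together with a direct computation. Concretely, I would show that the only non-commutative unital ring of odd order strictly below $81$ is the algebra $T_2(\mathbb{F}_3)$ of upper triangular $2\times2$ matrices over $\mathbb{F}_3$, which has order $27$, and that $S_k(T_2(\mathbb{F}_3))=0$ for every $k$. Since by hypothesis $S_k(R)\neq0$ for some $k$, this rules out $|R|=27$ and forces $|R|\geq81$. Note first that $\textrm{char}(R)$ and $|R|$ have the same prime divisors (via the decomposition $R\cong R_1\times\cdots\times R_l$), so odd characteristic means $|R|$ is odd.

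First I would pin down the possible orders. Using the characteristic decomposition $R\cong R_1\times\cdots\times R_l$ (valid for any finite ring, since the idempotents coming from the central copy of $\mathbb{Z}/\textrm{char}(R)\mathbb{Z}$ are central) together with the evident non-commutative analogue of Lemma \ref{chop}, the ring $R$ is non-commutative only if some factor $R_j$ is, and then $R_j$ has prime-power order $p_j^{e}$ with $p_j$ odd. Every unital ring of order $p$ or $p^2$ is commutative, because it is generated as a ring by its identity together with a single element; hence $e\geq3$ and $|R_j|\geq p_j^{3}\geq27$, with equality only when $p_j=3$. An elementary count of odd integers below $81$ then leaves $|R|=27$ as the unique admissible order: a second coprime factor would force $|R|\geq27\cdot5=135$, while a single prime factor $p^{e}\geq p^{3}$ with $p\geq5$ already gives $|R|\geq125$. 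Thus only the case $|R|=27$ must be analysed.

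Next I would classify the non-commutative unital rings of order $27$. Characteristic $27$ gives $R\cong\mathbb{Z}/27\mathbb{Z}$, and characteristic $9$ forces the additive group $\mathbb{Z}/9\mathbb{Z}\oplus\mathbb{Z}/3\mathbb{Z}$ to be generated over the central subring $\langle1\rangle$ by a single extra element, hence $R$ is commutative; so a non-commutative $R$ has characteristic $3$ and is a $3$-dimensional $\mathbb{F}_3$-algebra. Writing $J$ for its Jacobson radical, the semisimple quotient $R/J$ contains no matrix block (as $2^2>3$), so it is a product of fields and $\dim_{\mathbb{F}_3}J\in\{1,2\}$. When $\dim_{\mathbb{F}_3}J=2$ the ring is local with residue field $\mathbb{F}_3$, and nilpotency of $J$ forces $R$ to be commutative (either $J^2=0$, or $J\cong\mathbb{F}_3[u]/(u^3)$). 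When $\dim_{\mathbb{F}_3}J=1$ the quotient $\mathbb{F}_9$ is impossible (a one-dimensional $\mathbb{F}_3$-space admits no unital $\mathbb{F}_9$-action), so $R/J\cong\mathbb{F}_3\times\mathbb{F}_3$; lifting the two orthogonal idempotents and using the Peirce decomposition, the radical must sit in a single off-diagonal component, giving exactly $R\cong T_2(\mathbb{F}_3)$.

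Finally I would compute $S_k(T_2(\mathbb{F}_3))$ entrywise. Writing a general element as $\begin{pmatrix} a & b \\ 0 & d\end{pmatrix}$, its $k$-th power has diagonal $(a^k,d^k)$ and an off-diagonal entry that is $\mathbb{F}_3$-linear in $b$. Summing over all $27$ elements, each diagonal entry contributes $9\,S_k(\mathbb{F}_3)=0$ in characteristic $3$, while the off-diagonal entry vanishes because $\sum_{b\in\mathbb{F}_3}b=0$ by Proposition \ref{pre} i). Hence $S_k(T_2(\mathbb{F}_3))=0$ for all $k$, which contradicts $S_k(R)\neq0$ and completes the argument. I expect the classification of the order-$27$ rings to be the main obstacle: the order/counting reduction and the matrix computation are routine, but identifying $T_2(\mathbb{F}_3)$ as the unique non-commutative possibility requires the radical and Peirce analysis above, including the careful verification that all remaining cases collapse to commutative rings.
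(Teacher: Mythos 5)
Your proposal is correct and follows essentially the same route as the paper: reduce, via the characteristic decomposition and the commutativity of unital rings of order $p$ and $p^2$, to the single case $|R|=27$, invoke the uniqueness of the non-commutative unital ring with identity of order $p^3$, and check directly that its power sums vanish. The only difference is one of detail: the paper merely asserts the uniqueness (with a misprinted presentation --- its stated relations $x^2=y^2=0$, $xy=x$ force $x=(xy)y=xy^2=0$, so $y^2=y$ must be intended, giving exactly your $T_2(\mathbb{F}_3)$) and declares $S_k(R)=0$ ``easy to see,'' whereas you supply the radical/Peirce classification and the entrywise computation in full.
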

\begin{proof}
A finite non-commutative unital ring with prime-power characteristic must have cardinality $p^s$ for some prime $p$ and $s>2$. Thus, $|R|=p^s$ con $s>2$. Now, if $|R|=p^3$ we have that
$$R\cong \mathbb{Z}/p\mathbb{Z}<1,x,y>/(x^2=0,y^2=0,xy=x,yx=0)$$
because, up to isomorphism, there exists just one finite non-commutative unital ring with $p^3$ elements. Since, in this case it is easy to see that $S_k(R)=0$ the result follows.
\end{proof}

\subsection{Finite commutative unital rings such that $S_{|R|}(R)=\pm 1$}

At the end of Section 3, we have given the characterizations of finite commutative unital rings satisfying $S_{|R|}(R)=\pm 1$. Those characterizations allowed us to find, by computational means, rings with these properties. Hence, it arises the question of finding some strategies to search for these rings and even to find out if there is a finite number of them (as in the case of square-free $|R|$)

\subsection{Rings such that $S_{|R|-1}(R)=-1$. Generalized Giuga's conjecture}

If $R$ is a field $S_{|R|-1}(R)=-1$. The converse is true (see Corollary \ref{cor1}) if we restric to rings with prime-power characteristic. This immediately suggests the question about the existence of a ring $R$ which is not a field and satisfying that $S_{|R|-1} (R)=-1$. The following result gives a characterization of such a ring.

\begin{teor}
Let $R$ be a finite commutative unital ring. Then, $S_{|R|-1}(R)=-1$ if and only if the following conditions hold:
\begin{itemize}
\item[i)] $R\cong \mathbb{F}_{p_1^{s_1}} \times\cdots\times \mathbb{F}_{p_l^{s_l}}$ with $p_i \neq p_j$ for every $i\neq j$.
\item[ii)] $p_i^{s_i}-1\mid |R|-1$ for every $1\leq i\leq l$.
\item[iii)] $|R| \equiv p^{s_i} \pmod{p^{s_i+1}}$ for every $1\leq i\leq l$.
\end{itemize}
\end{teor}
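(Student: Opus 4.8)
The plan is to follow exactly the template used for Theorem \ref{uno} and for the preceding characterization of $S_{|R|}(R)=-1$, since the present statement differs from the latter only in replacing the exponent $|R|$ by $|R|-1$. Set $k=|R|-1$ throughout. For the forward implication, I would first observe that $-1$ is a unit in any commutative unital ring, so the hypothesis $S_{|R|-1}(R)=-1$ in particular makes $S_k(R)$ a unit. Corollary \ref{unit} then applies immediately and yields that $R$ is a product of fields $\mathbb{F}_{p_i^{s_i}}$ with pairwise distinct characteristics $p_i$ (condition i)) and that $p_i^{s_i}-1\mid k=|R|-1$ for every $i$ (condition ii)).

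It remains to extract condition iii), and for this I would pass to the componentwise description furnished by Lemma \ref{chop}, namely
$$S_k(R)=\left(\frac{|R|}{p_1^{s_1}}S_k(\mathbb{F}_{p_1^{s_1}}),\dots,\frac{|R|}{p_l^{s_l}}S_k(\mathbb{F}_{p_l^{s_l}})\right).$$
Because condition ii) guarantees $(p_i^{s_i}-1)\mid k$, Proposition \ref{pre} i) gives $S_k(\mathbb{F}_{p_i^{s_i}})=-1$ for every $i$. Hence the equality $S_k(R)=-1$ holds if and only if $\frac{|R|}{p_i^{s_i}}\cdot(-1)=-1$ in $\mathbb{F}_{p_i^{s_i}}$ for each $i$, that is, if and only if the integer $\frac{|R|}{p_i^{s_i}}$ reduces to $1$ modulo the characteristic $p_i$. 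Since $\frac{|R|}{p_i^{s_i}}$ is coprime to $p_i$, the congruence $\frac{|R|}{p_i^{s_i}}\equiv 1\pmod{p_i}$ is elementarily equivalent to $|R|\equiv p_i^{s_i}\pmod{p_i^{s_i+1}}$, which is precisely condition iii).

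For the converse I would simply run this computation in reverse: assuming i), ii) and iii), Proposition \ref{pre} i) again gives each $S_k(\mathbb{F}_{p_i^{s_i}})=-1$, while condition iii) forces each factor $\frac{|R|}{p_i^{s_i}}$ to equal $1$ in $\mathbb{F}_{p_i^{s_i}}$, so that every component of $S_k(R)$ equals $-1$ and therefore $S_k(R)=-1$. I do not expect a genuine obstacle here; the argument is a line-by-line adaptation of the proof of Theorem \ref{uno}, with the target value $1$ replaced by $-1$ and the exponent $|R|$ replaced by $|R|-1$. The only step needing a small verification is the purely number-theoretic translation between $\frac{|R|}{p_i^{s_i}}\equiv 1\pmod{p_i}$ and $|R|\equiv p_i^{s_i}\pmod{p_i^{s_i+1}}$, which follows at once from writing $|R|=p_i^{s_i}\cdot\frac{|R|}{p_i^{s_i}}$.
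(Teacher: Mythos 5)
Your proposal is correct and follows essentially the same route as the paper: the paper omits the proof, indicating it is identical to that of Theorem \ref{uno} (apply Corollary \ref{unit} to get conditions i) and ii), then use Lemma \ref{chop} and Proposition \ref{pre} i) componentwise to extract condition iii)), which is exactly the template you execute with the exponent $|R|-1$ and the target value $-1$. You even supply the small arithmetic translation between $\frac{|R|}{p_i^{s_i}}\equiv 1\pmod{p_i}$ and $|R|\equiv p_i^{s_i}\pmod{p_i^{s_i+1}}$ that the paper leaves as ``easily follows.''
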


Condition ii) is satisfied by many integers (by Carmichael numbers, for instance). Nevertheless, we have not been able to find among them any integer satisfying also condition iii). This question is closely related to Giuga's conjecture  \cite{GIU} that states that there are no square-free compound integers satisfying conditions ii) and iii) above. In other words, Giuga's conjecture states that $\displaystyle \sum_{i=1}^n i^{n-1} \equiv -1 \pmod{n}$ if and only if $n$ is prime.
Hence, we propose the following generalization of Giuga's conjecture.

\begin{con} 
Let $R$ be a finite commutative unital ring. Then $\displaystyle\sum_{r \in R} r^{|R|-1}=-1$ if and only if $R$ is a field.
\end{con}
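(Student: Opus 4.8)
The plan is to treat the two implications separately. The implication that a field satisfies the identity is immediate: if $R\cong\mathbb{F}_q$ then $|R|-1=q-1$ is divisible by $q-1$, so Proposition \ref{pre} i) gives $S_{|R|-1}(R)=S_{q-1}(\mathbb{F}_q)=-1$. The content therefore lies entirely in the converse, and here the structural part of the work is already carried out by the Theorem preceding this conjecture: the equality $S_{|R|-1}(R)=-1$ forces $R\cong\mathbb{F}_{p_1^{s_1}}\times\cdots\times\mathbb{F}_{p_l^{s_l}}$ with the $p_i$ pairwise distinct, subject to the arithmetic constraints $p_i^{s_i}-1\mid |R|-1$ and $|R|\equiv p_i^{s_i}\pmod{p_i^{s_i+1}}$ for every $i$. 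Such a product is a field exactly when $l=1$, so the whole conjecture is equivalent to the assertion that these constraints have no solution with $l\geq 2$.

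The next step is to make that assertion purely number-theoretic. Writing $q_i=p_i^{s_i}$ and $N=|R|=\prod_i q_i$, the first constraint reads $q_i-1\mid N-1$, while dividing the congruence $N\equiv q_i\pmod{p_i q_i}$ by $q_i$ turns the second into $\prod_{j\neq i}q_j\equiv 1\pmod{p_i}$ for each $i$. I would then try to derive a contradiction from the simultaneous validity of these two families under the hypothesis $l\geq 2$, beginning by disposing of the cases that do not already occur in the integer setting, namely those with some $s_i>1$ or with a proper extension field among the factors. The aim here would be to show that a hypothetical counterexample can always be taken square-free with every factor a prime field, using elementary size and congruence estimates as the basic tool; for instance, applying $q_i-1\mid N-1$ to the largest factor $q_m$ gives $q_m-1\mid\prod_{j\neq m}q_j-1$, which already forces $\prod_{j\neq m}q_j\geq q_m$. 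Once reduced to an integer $n=p_1\cdots p_l$ with $p_i-1\mid n-1$ and $n/p_i\equiv 1\pmod{p_i}$ for every $i$, we arrive exactly at the defining conditions of a Giuga number.

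This last point is the main obstacle, and it is a genuine one: the reduced statement is precisely Giuga's conjecture \cite{GIU}, which asserts that no composite square-free integer $n$ satisfies $\sum_{i=1}^n i^{n-1}\equiv -1\pmod n$ and which remains open. Consequently I do not expect an unconditional proof, since any complete argument would in particular settle Giuga's conjecture; the realistic target is a proof conditional on it, combined with the unconditional elimination of the surplus ring-theoretic cases described above. Determining exactly how much of the general conjecture lies strictly beyond Giuga's conjecture, and whether that surplus can be handled independently, is where I would expect to spend the most effort.
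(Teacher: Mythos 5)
This statement is presented in the paper as an open conjecture, not a theorem: no proof is given, and the paper's supporting material is exactly what you reconstruct, namely the preceding characterization theorem for rings with $S_{|R|-1}(R)=-1$ together with the remark that the square-free integer case is precisely Giuga's conjecture. Your analysis is correct and matches the paper's own framing --- the forward implication via Proposition \ref{pre} i), the reduction of the converse through the characterization to the conditions $q_i-1\mid N-1$ and $\prod_{j\neq i}q_j\equiv 1\pmod{p_i}$ with $q_i=p_i^{s_i}$, and the identification of the square-free prime-field case with the Carmichael--Giuga counterexample conditions --- and your closing observation is the right one: any unconditional proof would settle Giuga's conjecture, while the cases with some $s_i>1$ (proper finite fields among the factors) constitute a genuinely additional open surplus that the paper likewise leaves unresolved.
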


\end{document}